\newtheorem{theorem}{Theorem}[section]
\newtheorem{lemma}[theorem]{Lemma}
\newtheorem{prop}[theorem]{Proposition}
\newtheorem{cor}[theorem]{Corollary}
\newtheorem*{conjecture}{Conjecture}
\newtheorem{remark}[theorem]{Remark}
\newtheorem*{theorem*}{Theorem}
\theoremstyle{definition}
\newtheorem*{defn*}{Definition}
\def\XXint#1#2#3{{\setbox0=\hbox{$#1{#2#3}{\int}$}
     \vcenter{\hbox{$#2#3$}}\kern-.5\wd0}}
\newcommand{\Aut}{\textup{Aut}}
\newcommand{\Cl}{\mathcal{C}\negthinspace\ell}
\newcommand{\cyclic}[1]{C_{#1}}
\newcommand{\Epi}{\textup{Epi}}
\newcommand{\ffield}[1]{\mathbb{F}_{#1}}
\newcommand{\Gal}{\textup{Gal}}
\newcommand{\grindex}[2]{\left(#1\negthinspace:\negthinspace #2\right)}
\newcommand{\grossO}[1]{O\left(#1\right)}
\newcommand{\grossOmega}[1]{\Omega\left(#1\right)}
\newcommand{\Hom}{\textup{Hom}}
\newcommand{\iso}{\simeq}
\newcommand{\mc}[1]{\mathcal{#1}}
\newcommand{\mf}[1]{\mathfrak{#1}}
\newcommand{\mG}{w(G)}
\newcommand{\mmG}{m}
\newcommand{\N}[1]{\Norm{\mathfrak{#1}}}
\newcommand{\Norm}[1]{\big\|#1\big\|}
\newcommand{\open}[2]{1+\mf{#1}^{#2}}
\newcommand{\operp}{\bigcirc\kern-1.17em\perp}
\newcommand{\val}[1]{v_{\mf{#1}}}
\newcommand{\ord}[2]{\val{#1}(#2)}
\newcommand{\QQ}{\mathbf{Q}}
\newcommand{\tupel}[1]{\boldsymbol{#1}}
\newcommand{\ZZ}{\mathbf{Z}}
\date{2013}
\renewcommand{\thefootnote}{\fnsymbol{footnote}}
\begin{document}

\begin{frontmatter}

\title{Distribution of Artin-Schreier-Witt Extensions}
\author{Thorsten Lagemann\footnote{
ERGO Versicherungsgruppe, Aachener Stra\ss e 300, 50933 K\"oln, Germany. t.lagemann@ergo.de}}
\journal{}

\begin{abstract}
The article at hand contains exact asymptotic formulas for the distribution of conductors of abelian $p$-extensions of global function fields of characteristic~$p$. These yield a new conjecture for the distribution of discriminants fueled by an interesting lower bound.
\end{abstract}

\end{frontmatter} 

\setcounter{footnote}{0}
\renewcommand{\thefootnote}{\arabic{footnote}}

\section{Main results}

Let $F$ be a global function field of characteristic~$p$, that is a transcendental extension of degree $1$ over some finite field $\ffield{q}$ of cardinality $q$, and let $\ffield{q}$ be algebraically closed in $F$. For an extension $E/F$, let $\mf{f}(E/F)$ be its relative conductor, and $\Norm{\mf{f}(E/F)} = q^{\deg \mf{f}(E/F)}$ its absolute norm. The object of interest is the counting function
$$ C(F,G;X) = \left|\Big\{ E/F : \Gal(E/F) \iso G,\, \Norm{\mf{f}(E/F)} \leq X \Big\}\right| $$
of field extensions $E/F$ in a fixed algebraic closure of $F$ with given finite Galois group $G$ and bounded conductor. The values of this function coincides with finite coefficient sums of the Dirichlet series 
$$ \Phi(F,G;s) = \sum\limits_{\Gal(E/F)\iso G} \Norm{\mf{f}(E/F)}^{-s}, $$
generated by the extensions $E/F$ with Galois group $G$. For the article at hand, we are interested in field extensions with abelian Galois group of exponent $p^e$. These extensions are called Artin-Schreier-Witt extensions.

\begin{theorem}\label{main1} Let $F$ be a global function field of characteristic~$p$ and $G$ be the finite abelian $p$-group with exponent $p^e$ and $p^i$-ranks $r_i(G) = \log_p \grindex{p^{i-1}G}{p^iG}$. Let 
$$ \alpha_p(G) = \frac{1 + (p-1) \sum\nolimits_{i=1}^e p^{e-i} r_i(G)}{p^e} $$
and
$$ \beta(F,G) = \begin{cases} 
  p^e - 1, & \text{if $G$ is cyclic}, \\
  p^{e-f}, & \text{elsewise, where $1\leq f\leq e$ is minimal such that $p^fG$ is cyclic}.               
                \end{cases} 
$$
Then the Dirichlet series $\Phi(F,G;s)$ has convergence abscissa $\alpha_p(G)$, and it possesses a meromorphic continuation, such that $\Phi(F,G;s) \prod\nolimits_{l=2}^{p^e} \zeta_F\big(ls - \sum\nolimits_{i=1}^e w_i(l)r_i(G)\big)$ is holomorphic for $\Re(s) > \alpha_p(G) - 1/(2p^e)$, where $\zeta_F(s)$ denotes the zeta function of $F$ and $w_i(l) = \lfloor (l-1)/p^{i-1} \rfloor - \lfloor (l-1)/p^i \rfloor$. The periodic pole $s = \alpha_p(G)$ with period $2\pi i/\log(q)$ has order $\beta(F,G)$, whereas the order of every other pole on the axis $\Re(s) = \alpha_p(G)$ does not exceed $\beta(F,G)$.  
\end{theorem}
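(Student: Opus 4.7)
The plan is to combine Artin--Schreier--Witt theory, local class field theory, and M\"obius inversion to reduce $\Phi(F,G;s)$ to an Euler product that visibly factors through shifted copies of $\zeta_F$.

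By class field theory, extensions $E/F$ with $\Gal(E/F)\simeq G$ correspond, up to $\Aut(G)$, to continuous surjections $\chi\colon C_F\twoheadrightarrow G$ from the id\`ele class group, and the conductor decomposes locally as $\mf{f}(E/F)=\sum_{\mf{p}}f_{\mf{p}}(\chi_{\mf{p}})\,\mf{p}$, where $f_{\mf{p}}(\chi_{\mf{p}})$ is the smallest $n$ such that $\chi_{\mf{p}}$ annihilates the unit-filtration subgroup $U_{\mf{p}}^{(n)}$. M\"obius inversion on the subgroup lattice of $G$ then converts the surjectivity condition into
$$\Phi(F,G;s)=|\Aut(G)|^{-1}\sum_{H\le G}\mu(H,G)\,\Psi(F,H;s),\qquad \Psi(F,H;s)=\sum_{\chi\colon C_F\to H}\Norm{\mf{f}(\chi)}^{-s},$$
and every proper $H\lneq G$ will contribute at a strictly smaller abscissa (verified a posteriori from the local calculation below), so only $\Psi(F,G;s)$ governs the behaviour at $s=\alpha_p(G)$.

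Next, $\Psi(F,H;s)$ splits as a holomorphic global correction (coming from the finite divisor class group and the global units) times the Euler product $\prod_{\mf{p}}\psi_{\mf{p}}(s,H)$, where $\psi_{\mf{p}}(s,H)=\sum_{\chi_{\mf{p}}\colon F_{\mf{p}}^\times\to H}\Norm{\mf{p}}^{-f_{\mf{p}}(\chi_{\mf{p}})\,s}$. Local Witt-vector duality makes this factor explicit in terms of the jumps of $U_{\mf{p}}^{\bullet}$ and the $p^i$-ranks $r_i(G)$: characters of $F_{\mf{p}}^\times$ of conductor exponent exactly $n$ are parametrized by Witt vectors modulo $\wp$ whose highest nonvanishing coordinate sits at a level prescribed by $n\bmod p^e$, and a short counting argument shows that the number of such characters is a polynomial in $\Norm{\mf{p}}$ whose exponent depends on $n$ only through the quantities $w_i(n)=\lfloor(n-1)/p^{i-1}\rfloor-\lfloor(n-1)/p^i\rfloor$ appearing in the theorem.

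Regrouping these local contributions by the residue class $l\in\{2,\ldots,p^e\}$ of the conductor exponent then yields
$$\psi_{\mf{p}}(s,G)=R_{\mf{p}}(s)\prod_{l=2}^{p^e}\bigl(1-\Norm{\mf{p}}^{-(ls-\sum_{i}w_i(l)\,r_i(G))}\bigr)^{-1},$$
with a correction of the form $R_{\mf{p}}(s)=1+O\bigl(\Norm{\mf{p}}^{-1-1/(2p^e)}\bigr)$ uniformly on $\Re(s)\ge\alpha_p(G)-1/(2p^e)$. Since $\prod_{\mf{p}}R_{\mf{p}}(s)$ is then holomorphic on that half-plane, multiplying by the zeta factors identifies $\Phi(F,G;s)\prod_{l=2}^{p^e}\zeta_F\bigl(ls-\sum_{i}w_i(l)\,r_i(G)\bigr)$ as holomorphic there, as claimed. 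The periodic poles of $\Phi$ on $\Re(s)=\alpha_p(G)$ arise from the zeta factors, equally spaced by $2\pi i/\log(q)$; at the real point $s=\alpha_p(G)$ the equation $ls-\sum_{i}w_i(l)\,r_i(G)=1$ is solved by exactly $p^e-1$ indices $l$ when $G$ is cyclic and by $p^{e-f}$ indices otherwise, reproducing the asserted order $\beta(F,G)$ and showing that off-axis poles cannot exceed this multiplicity. The main obstacle is the local Witt-vector count itself, which must produce both the exponents $w_i(l)$ and the uniform bound on $R_{\mf{p}}(s)$: at places with small residue field and at higher filtration breaks the interplay between the Witt filtration and the subgroup structure of $G$ becomes delicate, and carrying the error term cleanly into the holomorphic strip $\Re(s)>\alpha_p(G)-1/(2p^e)$ is the most technical part of the argument.
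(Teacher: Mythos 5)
Your high-level plan matches the paper's: class field theory to parametrize extensions, M\"obius/Delsarte inversion over the subgroup lattice of $G$, a local--global factorisation, and identification of the divergent part with shifted copies of $\zeta_F$. The paper implements the local counting not via Witt-vector duality but via the $p$-signature of ray class groups (Lemma~\ref{piRank}, the Hasse formula), using the Einseinheitensatz for the jumps of $\langle 1+\mf{p}\rangle/\langle 1+\mf{p}^m\rangle$; this is an equivalent way to get the exponents $w_i(l)$.

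There is, however, a concrete gap in your reduction step. You assert that ``every proper $H\lneq G$ will contribute at a strictly smaller abscissa,'' but this is false whenever $r_e(G)=1$. For instance, if $G=\cyclic{p^e}$ and $H=pG=\cyclic{p^{e-1}}$, a direct computation gives $\alpha_p(H)=\alpha_p(G)=1$. This is precisely the content of Proposition~\ref{AlphaGH}: proper subgroups $H$ with $pG\unlhd H$ satisfying $r_i(H)=r_i(G)$ for $i<e$ and $r_e(H)=0$ have the \emph{same} abscissa. The paper cannot simply discard these $H$; instead it shows that $\Phi_H(s)\Lambda_G(s)^{-1}$ remains holomorphic because $\Lambda_G$ carries the extra zeta factors indexed by $p^{e-1}<l\le p^e$ that $\Lambda_H$ lacks, and these factors cancel the singular part of $\Phi_H$. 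Your ``verified a posteriori'' plan would fail here, and without this cancellation argument the claimed order $\beta(F,G)$ of the pole at $s=\alpha_p(G)$ cannot be established (naively, the exceptional $H$-terms could change the pole order).

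A second, subtler issue: you describe the discrepancy between $\Psi(F,H;s)$ and the Euler product $\prod_{\mf{p}}\psi_{\mf{p}}(s,H)$ as a \emph{multiplicative} holomorphic correction $\prod_{\mf{p}}R_{\mf{p}}(s)$. The paper's decomposition is \emph{additive}: $\Phi(F,G;s)=\sum_{H}e_H\Phi_H(s)+\Upsilon_G(s)$, where $\Upsilon_G$ (Proposition~\ref{Upsilon}) is a finite sum supported on modules $\mf{m}_0\mf{m}_1^2$ with $\mf{m}_0$ in a finite set and $\mf{m}_1$ squarefree, and this additive term collects the Selmer-group defect (i.e.\ the failure of the coefficients $c_{\mf{m}}$ to be multiplicative). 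A purely multiplicative correction does not capture the obstruction coming from the global constraint that the character be trivial on $F^\times$; getting this right is exactly the purpose of Lemma~\ref{MoebiusDifferent}. You should replace the multiplicative $R_{\mf{p}}$ ansatz with an additive error term and verify, as the paper does, that its support forces holomorphy well to the left of $\alpha_p(G)-1/(2p^e)$.
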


The series $\Phi(F,G;s)$ is a power series in $t=q^{-s}$, which is convergent for $|t|<q^{-\alpha_p(G)}$, and its poles are located at $t=\xi q^{-\alpha_p(G)}$ for finitely many roots of unity $\xi$. If $c_n$ denotes the $n$-th power series coefficient of $\Phi(F,G;s)$, we obtain $C(F,G;q^m)=\sum\nolimits_{n=0}^m c_n$. By an application of the Cauchy integral formula, which is elaborated in Theorem A.5 \cite{Lagemann2012}, this coefficient sum has the following asymptotic behaviour.

\begin{theorem}\label{main2} Let $F$ be a global function field of characteristic~$p$ and $G$ be a finite abelian $p$-group. %with exponent $p^e$ and $p^i$-ranks $r_i(G) = \log_p \grindex{p^{i-1}G}{p^iG}$. 
Then the number of Artin-Schreier-Witt extensions $E/F$ with Galois group $G$ has asymptotic\footnote{Hereby the asymptotic equivalence $f(X) \sim g(X)$ means that there are integers $\ell \geq 1$ and $e \geq 0$ such that $\lim\nolimits_{n\rightarrow\infty} f(q^{\ell n +e})/g(q^{\ell n + e}) = 1$ holds. The main reason for introducing this slightly weaker definition is, that the counting functions $f(X)$ in question are step functions in $X=q^n$, where $n$ runs over some arithmetic progression. Hence, the asymptotic quotient of $f(X)/g(X)$ would not exist for continuous functions $g(X)$. In order to compare function field asymptotics with number field asymptotics, we use this redefinition regarding the other Landau symbols as well. See \cite{Lagemann2012} for comments and examples.}
$$ C(F,G;X) \sim \gamma(F,G) X^{\alpha_p(G)} \log(X)^{\beta(F,G)-1} $$
with some explicitly computable constant $\gamma(F,G) > 0$.
\end{theorem}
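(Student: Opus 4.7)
The plan is to derive Theorem~\ref{main2} from Theorem~\ref{main1} by extracting the coefficient asymptotics of $\Phi(F,G;s)$ viewed as a power series in $t=q^{-s}$, exactly as in Theorem~A.5 of \cite{Lagemann2012} and indicated by the author in the paragraph preceding the statement. Writing $c_n$ for the coefficient of $t^n$ in $\Phi(F,G;s)$, so that $c_n$ counts extensions $E/F$ with $\Gal(E/F)\iso G$ and $\Norm{\mf{f}(E/F)}=q^n$, one has $C(F,G;q^m)=\sum_{n=0}^m c_n$, and the problem reduces to an asymptotic analysis of this partial sum via the singularities of $\Phi$ on the circle $|t|=q^{-\alpha_p(G)}$.

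First I would apply Theorem~\ref{main1} to record that on the line $\Re(s)=\alpha_p(G)$ there are only finitely many singularities of $\Phi(F,G;s)$, located at $t=\xi\,q^{-\alpha_p(G)}$ for roots of unity $\xi$ (as the singular locus is $2\pi i/\log q$-periodic and $\Phi$ is a power series in $t$), the one at $\xi=1$ has order exactly $\beta(F,G)$, and the others have order at most $\beta(F,G)$; moreover the continuation is meromorphic on $|t|<q^{-\alpha_p(G)+1/(2p^e)}$ with these as its only singularities on the inner boundary. Deforming the Cauchy contour in
$$ c_n=\frac{1}{2\pi i}\oint_{|t|=r}\Phi(F,G;s(t))\,t^{-n-1}\,dt $$
outward to $|t|=q^{-\alpha_p(G)+1/(2p^e)}$ and summing residues gives
$$ c_n=\sum_{\xi}P_\xi(n)\,\xi^{-n}\,q^{n\alpha_p(G)}+O\!\left(q^{n(\alpha_p(G)-1/(2p^e))}\right), $$
with each $P_\xi$ a polynomial of degree at most $\beta(F,G)-1$ and $P_1$ of degree exactly $\beta(F,G)-1$. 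Summation over $0\leq n\leq m$ then produces a main contribution of order $m^{\beta(F,G)-1}q^{m\alpha_p(G)}$ coming from the $\xi=1$ pole, together with oscillating terms from $\xi\neq 1$ and a strictly smaller error.

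Setting $\ell$ equal to the least common multiple of the orders of the finitely many roots of unity $\xi$ involved, the factor $\xi^{-n}$ is constant on each residue class $n\equiv e\pmod\ell$, so along any such progression $C(F,G;q^{\ell k+e})$ admits a true first-order asymptotic $\gamma_e(F,G)\,q^{(\ell k+e)\alpha_p(G)}(\ell k+e)^{\beta(F,G)-1}$ for an explicit constant $\gamma_e(F,G)\in\RR$. The weakened $\sim$ of the footnote is designed precisely to accommodate this restriction to an arithmetic progression, and one sets $\gamma(F,G):=\gamma_e(F,G)$ for a suitable $e$. The principal obstacle is verifying strict positivity $\gamma(F,G)>0$: the leading coefficient of $P_1$ is a positive multiple of the top Laurent coefficient of $\Phi(F,G;s)$ at $s=\alpha_p(G)$, which by the factorisation in Theorem~\ref{main1} inherits a positive sign from the product of simple-pole residues of the $\zeta_F$-factors times the positive value at $s=\alpha_p(G)$ of the holomorphic correction factor. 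Because the $c_n$ are non-negative, complete cancellation of all $\gamma_e$ across residue classes is impossible, so the maximising $e$ supplies the required strictly positive constant $\gamma(F,G)$.
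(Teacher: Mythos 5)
Your proof takes essentially the same route as the paper: both reduce Theorem~\ref{main2} to a Cauchy/Tauberian coefficient-extraction argument on the power series in $t=q^{-s}$, using Theorem~\ref{main1} (together with Proposition~\ref{PropertiesLambdaG}) to locate the finitely many poles at roots-of-unity multiples of $q^{-\alpha_p(G)}$ on the circle of convergence, and then invoking a Tauberian theorem (the paper cites Theorem~A.5 of \cite{Lagemann2012}; you reconstruct its content, including the contour deformation, the summation over arithmetic progressions dictated by the roots of unity, and the positivity of $\gamma(F,G)$ via nonnegativity of the coefficients $c_n$).
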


Theorem \ref{main1} and Theorem \ref{main2} are generalisations of the results of my first article \cite{Lagemann2012} on the conductor distribution of abelian $p$-extensions in characteristic $p$. The computation of $\gamma(F,G)$ will not be generalised, examples for $G = \cyclic{p}$ or $G= \cyclic{2}^r$ can be found in \cite{Lagemann2012}. We will tightly follow the lead of \cite{Lagemann2012} by generalising the respective proof steps. The main ingredients for the proof are fivefold. We first need to compute the $p$-signature of ray class groups, which formula is named by \textit{Hasse}. Then the number of $G$-extensions $E/F$ with given module $\mf{m}$ is given by a \textit{Delsarte} formula for the number of subgroups with given quotient, which itself is recapped in Appendix A. Then we require a \textit{M\"obius} formula in order to compute the number $c_{\mf{m}}$ of $G$-extensions $E/F$ with conductor $\mf{m}$ from the number of $G$-extensions with module $\mf{m}$. In addition, the latter two formula 
provide a nice local-global principle for $c_{\mf{m}}$, which yield a decomposition of $\Phi(F,G;s)$ in sum of \textit{Euler} 
products. Finally, we obtain the asymptotic behaviour of $C(F,G;X)$ by a \textit{Cauchy-Tauber} formula. Similar to \cite{Lagemann2012}, we explore the consequences of Theorem \ref{main2} on the distribution of discriminants. We obtain a lower and an upper bound respectively, where the lower bound leads to a conjecture for the asymptotic behaviour of discriminants of Artin-Schreier-Witt extensions.

\section{Consequences on the distribution of discriminants}

For given global function field $F$ and finite group $G$, consider the number of field extensions $E/F$ in a fixed algebraic closure of $F$ with Galois group $G$ and bounded discriminant $\Norm{\mf{d}(E/F)} = q^{\deg \mf{d}(E/F)}$, that is the function
$$ Z(F,G;X) = \left|\Big\{ E/F : \Gal(E/F) \iso G,\, \Norm{\mf{d}(E/F)} \leq X \Big\}\right|. $$ 
For the finite abelian $p$-group $G$ with exponent $p^e$ and $p^i$-ranks $r_i(G) = \log_p \grindex{p^{i-1}G}{p^iG}$, let
$$ a_p(G) =  \frac{\alpha_p(G)}{\mG} =  \frac{1 + (p-1) \sum\nolimits_{i=1}^e p^{e-i} r_i(G)}{p \sum\nolimits_{i=1}^e p^{e-i} (|p^{i-1}G| - |p^iG|)}  $$
with
$$ \mG = \sum\limits_{i=0}^{e-1} p^{-i} \Big(\big|p^{i}G\big| - \big|p^{i+1}G\big|\Big).  $$

\begin{lemma}\label{FuehrerDiskriminantenFormelAbschaetzung} Let $E/F$ be a $G$-extension with conductor $\mf{f}(E/F)$ and discriminant $\mf{d}(E/F)$, and let $\mf{p}$ a prime supporting these modules. Then it has discriminant exponent
$$ \val{p}(\mf{d}(E/F)) \leq  \sum\limits_{i=0}^{e-1} \left\lceil \frac{\val{p}(\mf{f}(E/F))}{p^{i}} \right\rceil \Big( \left|p^{i} G \right| - \left|p^{i+1} G \right| \Big) ,  $$
where $\lceil x \rceil$ denotes the least integer larger or equal to $x$.
\end{lemma}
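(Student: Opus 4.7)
The plan is to apply the conductor-discriminant formula locally at $\mf{p}$, partition the resulting sum of character conductors according to the $p$-adic depth of the characters in the dual group, and control each layer by iterating a cyclic one-step bound due to Hasse-Schmid. The conductor-discriminant formula yields
$$ \val{p}(\mf{d}(E/F)) = \sum_{\chi \in \hat{G}\setminus\{1\}} \val{p}(\mf{f}(\chi)), $$
with unramified characters contributing zero. Every nontrivial $\chi$ lies in a unique layer $p^i\hat{G}\setminus p^{i+1}\hat{G}$ of the filtration
$$ \hat{G}\supseteq p\hat{G}\supseteq\cdots\supseteq p^e\hat{G}=\{1\}, $$
with $0\leq i\leq e-1$. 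Pontryagin duality gives $|p^i\hat{G}| = |p^iG|$, so this layer contains exactly $|p^iG| - |p^{i+1}G|$ characters, matching the coefficients appearing on the right-hand side of the lemma.

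The key estimate to establish is
$$ \val{p}(\mf{f}(\chi)) \leq \Big\lceil v/p^i \Big\rceil \quad \text{for every} \ \chi \in p^i \hat{G}, $$
where $v = \val{p}(\mf{f}(E/F))$. Write $\chi = \psi^{p^i}$ for some $\psi \in \hat{G}$ of order $p^n$. Then $\psi$ cuts out a cyclic degree-$p^n$ subextension $L_\psi \subseteq E$, and $\chi$ corresponds to the unique $\cyclic{p^{n-i}}$-subextension $L_\chi \subseteq L_\psi$. The inclusion $L_\psi \subseteq E$ gives $\val{p}(\mf{f}(L_\psi/F)) \leq v$, and the character conductor $\val{p}(\mf{f}(\chi))$ coincides with $\val{p}(\mf{f}(L_\chi/F))$ since $\chi$ is a primitive character of $\Gal(L_\chi/F)$. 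Hence the estimate reduces to the one-step bound
$$ \val{p}(\mf{f}(L'/F)) \leq \lceil \val{p}(\mf{f}(L/F))/p \rceil $$
for an arbitrary cyclic $\cyclic{p^n}$-extension $L/F$ and its unique $\cyclic{p^{n-1}}$-subextension $L'/F$. Iterating this $i$ times, together with the ceiling identity $\lceil\lceil x/p\rceil/p\rceil = \lceil x/p^2\rceil$ for positive integers $x$, yields the full bound without loss.

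The one-step bound is the main obstacle, and the plan is to deduce it from Schmid's explicit conductor formula for cyclic Artin-Schreier-Witt extensions. Representing $L/F$ locally by a reduced Witt vector $(a_0,\ldots,a_{n-1})$ with $m_k = -\val{p}(a_k)$, Schmid's formula yields $\val{p}(\mf{f}(L/F)) = \max_{0\leq k\leq n-1} p^{n-1-k}(m_k+1)$, while the analogous expression for $L'$ ranges over $0\leq k\leq n-2$; dividing the former by $p$ therefore majorises the latter termwise, and the ceiling appears because $\val{p}(\mf{f}(L'/F))$ is an integer. Summing the uniform character estimate over the depth layers then gives
$$ \val{p}(\mf{d}(E/F)) = \sum_{i=0}^{e-1} \sum_{\chi \in p^i\hat{G}\setminus p^{i+1}\hat{G}} \val{p}(\mf{f}(\chi)) \leq \sum_{i=0}^{e-1} \Big\lceil v/p^i\Big\rceil \big(|p^iG| - |p^{i+1}G|\big), $$
which is the lemma.
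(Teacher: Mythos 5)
Your overall skeleton is the same as the paper's: apply the conductor-discriminant formula locally at $\mf{p}$, stratify characters by the layers $p^i\hat{G}\setminus p^{i+1}\hat{G}$ of the power filtration, and bound the conductor exponent of a character in depth $\geq i$ by $\lceil v/p^i\rceil$ where $v = \val{p}(\mf{f}(E/F))$. You do streamline one step: by summing directly over $\hat{G}$ (with unramified characters contributing zero) you get the coefficients $|p^iG|-|p^{i+1}G|$ immediately, whereas the paper sums over the dual $\hat{G}_{\mf{p}}$ of the decomposition group and then must separately verify the comparison $\grindex{G}{G_{\mf{p}}}\sum a_i(|p^i\hat{G}_{\mf{p}}|-|p^{i+1}\hat{G}_{\mf{p}}|)\leq\sum a_i(|p^iG|-|p^{i+1}G|)$. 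That is a genuine, if modest, simplification.

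However, your derivation of the one-step bound $\val{p}(\mf{f}(p\chi))\leq\lceil\val{p}(\mf{f}(\chi))/p\rceil$ via Schmid's explicit conductor formula is both the heaviest part of your plan and the place where it breaks. The formula you quote, $\val{p}(\mf{f}(L/F)) = \max_{0\leq k\leq n-1} p^{n-1-k}(m_k+1)$, is not correct; the conductor exponent for a reduced Witt vector $(a_0,\ldots,a_{n-1})$ with $m_k=-\val{p}(a_k)$ is $\bigl(\max_{0\leq k\leq n-1} p^{n-1-k}m_k\bigr)+1$. With the correct formula the claimed ``termwise majorization after dividing by $p$'' no longer produces the ceiling on the nose: you must additionally observe that $M_{n-1}=\max_{k<n-1}p^{n-2-k}m_k$ is an integer $\leq M_n/p$, hence $M_{n-1}+1\leq\lfloor M_n/p\rfloor+1=\lceil(M_n+1)/p\rceil$. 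Moreover Schmid's formula as stated presumes a reduced Witt representative of a totally ramified local extension, so the not-totally-ramified cases need a word. The paper avoids all of this: in characteristic $p$ one has $(1+x)^p=1+x^p$, hence $\langle 1+\mf{p}^n\rangle^p\leq\langle 1+\mf{p}^{pn}\rangle$, and from $p\chi(1+x)=\chi((1+x)^p)$ it follows directly that $f_{\mf{p}}(p\chi)\leq\lceil f_{\mf{p}}(\chi)/p\rceil$, which iterates. You should replace the Schmid detour by this two-line argument; the rest of your proof then goes through cleanly.
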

\begin{proof} Let $E_{\mf{P}}/F_{\mf{p}}$ be any localisation of $E/F$, $G_{\mf{p}}$ its decomposition group, and let $\hat{G}_{\mf{p}}$ be the group of characters $\chi : F_{\mf{p}}^{\times} \rightarrow \QQ/\ZZ$ vanishing on the norm group $N(E_{\mf{P}}^{\times}) \leq F_{\mf{p}}^{\times}$. For $\chi \in \hat{G}_{\mf{p}}$, let $f_{\mf{p}}(\chi)$ be the conductor exponent of its kernel in $F_{\mf{p}}^{\times}$, namely the least integer $m\geq 0$ such that $\chi$ vanishes on the $m^{\textup{th}}$ unit group $\langle 1 + \mf{p}^m \rangle \leq F_{\mf{p}}^{\times}$. By the F\"uhrerdiskriminantenformel\footnote{E.g. see formula~(5.9), p. 230 in \cite{KochAlgebraicNumberTheory}.} for abelian extensions, the global discriminant exponent of $\mf{p}$ is given by 
$$ \val{p}(\mf{d}(E/F)) = \grindex{G}{G_{\mf{p}}} \sum\limits_{\chi\in\hat{G}_{\mf{p}}} f_{\mf{p}}(\chi). $$ 
For any $i\geq 0$, we have
$$ f_{\mf{p}}(p^i\chi) \leq \left\lceil \frac{f_{\mf{p}}(\chi)}{p^i} \right\rceil. $$
This follows from the subgroup relation $ \langle 1 + \mf{p}^n \rangle^{p} \leq \langle 1 + \mf{p}^{pn} \rangle $,\footnote{E.g. see Proposition 5.6, p. 14 in \cite{Fesenko2002}.} 
and $ p\chi(\langle 1 + \mf{p}^n\rangle) = \chi(\langle 1 + \mf{p}^n\rangle^{p}) \leq \chi(\langle 1 + \mf{p}^{pn}\rangle). $ Then we obtain
$$ f_{\mf{p}}(\chi) \leq \left\lceil \frac{\val{p}(\mf{f}(E/F))}{p^i} \right\rceil $$
for $\chi\in p^i\hat{G}_{\mf{p}}$, and 
$$ \sum\limits_{\chi\in\hat{G}_{\mf{p}}} f_{\mf{p}}(\chi) \leq \sum\limits_{i=0}^{e-1} a_i \Big( \big|p^{i} \hat{G}_{\mf{p}} \big| - \big|p^{i+1} \hat{G}_{\mf{p}} \big| \Big),  $$
by setting $a_i = \lceil \val{p}(\mf{f}(E/F))/p^i \rceil$. With the F\"uhrerdiskriminantenformel above, we obtain
$$ \val{p}(\mf{d}(E/F)) \leq \grindex{G}{G_{\mf{p}}} \sum\limits_{i=0}^{e-1} a_i \Big( \big|p^{i} \hat{G}_{\mf{p}} \big| - \big|p^{i+1} \hat{G}_{\mf{p}} \big| \Big). $$
Now Lemma \ref{FuehrerDiskriminantenFormelAbschaetzung} follows by validating
$$ \grindex{G}{G_{\mf{p}}} \sum\limits_{i=0}^{e-1} a_i \Big( \big|p^{i} \hat{G}_{\mf{p}} \big| - \big|p^{i+1} \hat{G}_{\mf{p}} \big| \Big) \leq \sum\limits_{i=0}^{e-1} a_i \Big( \left|p^{i} G \right| - \left|p^{i+1} G \right| \Big). $$
But this inequality is equivalent to 
$$ 0 \leq a_{e-1} \Big(\grindex{G}{G_{\mf{p}}} -1  \Big) + \sum\limits_{i=1}^{e-1} (a_i - a_{i-1}) \Big( \big|p^iG| - \grindex{G}{G_{\mf{p}}} \big| p^iG_{\mf{p}} \big| \Big).   $$
Note that $p^e$ is the exponent of $G$. The first summand on the right hand side is clearly nonnegative, and both of parentheses occuring in the sum are nonpositive.
\end{proof}

\begin{cor}\label{FuehrerDiskriminantenAbschaetzung} For a $G$-extension $E/F$, we obtain 
$$ \Norm{\mf{d}(E/F)} \leq \Norm{\mf{f}(E/F)}^{\mG} \Norm{\tilde{\mf{f}}(E/F)}^{|pG|-1}, $$
where $\mG=\sum\nolimits_{i=0}^{e-1} p^{-i} (|p^iG|-|p^{i+1}G|)$ and $\tilde{\mf{f}}(E/F)$ is the squarefree part of $\mf{f}(E/F)$. 
\end{cor}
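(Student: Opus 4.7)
The claimed inequality compares norms $\Norm{\cdot}=q^{\deg(\cdot)}$ of divisors, so after taking $\log_q$ it is additive in primes and reduces at once to a prime-by-prime statement: for every prime $\mf{p}$ of $F$,
$$ \val{p}(\mf{d}(E/F)) \leq \mG\,\val{p}(\mf{f}(E/F)) + (|pG|-1)\,\val{p}(\tilde{\mf{f}}(E/F)), $$
where $\val{p}(\tilde{\mf{f}}(E/F))$ equals $1$ whenever $\mf{p}\mid\mf{f}(E/F)$ and $0$ otherwise. Multiplying by $\deg\mf{p}$, summing over $\mf{p}$, and exponentiating in $q$ recovers the corollary, so the entire argument will take place locally at a single prime.

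Fix such a $\mf{p}$ and set $n=\val{p}(\mf{f}(E/F))$. The case $n=0$ is immediate, both sides vanishing since $\lceil 0/p^i \rceil = 0$ for all $i$ in Lemma~\ref{FuehrerDiskriminantenFormelAbschaetzung}. For $n\geq 1$ I would start from that lemma and split off its $i=0$ summand exactly, observing that $\lceil n/p^0 \rceil = n$ needs no rounding:
$$ \val{p}(\mf{d}(E/F)) \leq n\,(|G|-|pG|) + \sum_{i=1}^{e-1}\Big\lceil \frac{n}{p^i}\Big\rceil\big(|p^iG|-|p^{i+1}G|\big). $$
On the remaining indices I intend to invoke the elementary bound $\lceil n/p^i\rceil \leq n/p^i + 1$, which cleanly separates each summand into a fractional main part and a unit residual.

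Recombining all the $n/p^i$ contributions with the previously isolated $i=0$ term reassembles precisely $n\,\mG$, by the very definition $\mG=\sum_{i=0}^{e-1} p^{-i}(|p^iG|-|p^{i+1}G|)$, while the residual $+1$ contributions telescope to
$$ \sum_{i=1}^{e-1}\big(|p^iG|-|p^{i+1}G|\big) \;=\; |pG|-|p^eG| \;=\; |pG|-1, $$
since $p^e$ is the exponent of $G$ and hence $p^eG=0$. This yields exactly the desired local inequality for $n\geq 1$ and finishes the proof. The only point requiring care is to handle the $i=0$ summand \emph{without} the ceiling estimate; any coarser treatment there would inflate the coefficient of $\val{p}(\mf{f}(E/F))$ beyond $\mG$, and the clean shape stated in the corollary could not be recovered from the remaining telescoping.
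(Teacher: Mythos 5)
Your proof is correct and follows the same route as the paper's one-line proof, namely Lemma~\ref{FuehrerDiskriminantenFormelAbschaetzung} together with $\lceil x\rceil \le x+1$. You make explicit the point that the paper's terse justification elides: the $i=0$ summand of Lemma~\ref{FuehrerDiskriminantenFormelAbschaetzung} has an exact ceiling $\lceil n/p^0\rceil = n$, so the $+1$ slack arises only for $i\ge 1$, and the residuals telescope to $|pG|-1$ rather than $|G|-1$; applying the ceiling estimate blindly at $i=0$ would give the weaker exponent $|G|-1$, so this observation is genuinely necessary to obtain the stated bound.
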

\begin{proof} This follows from Lemma \ref{FuehrerDiskriminantenFormelAbschaetzung} and the obvious inequality $\lceil x \rceil \leq  x  + 1$.
\end{proof}

The inequations above provide a sharp estimation of the discriminant in terms of the conductor and its support. An easy example for the sharpness is given by the $\cyclic{4}$-extension of the local field $K=\ffield{2}((t))$ with norm group $\langle t, 1 + t^3 \ffield{2}[[t]] \rangle \leq K^{\times}$. It has conductor exponent~$3$ and discriminant exponent~$8$.

\begin{theorem} Let $F$ be a global function field of characteristic $p$ and $G$ be a finite abelian $p$-group. Then the number of Artin-Schreier-Witt extensions $E/F$ with Galois group $G$ has asymptotic\footnote{We write $g(X)\in\grossOmega{f(X)}$ for $f(X)\in\grossO{g(X)}$. Note that we make use of a slightly different definition of the Landau symbols, as introduced in the beginning of this article.}
$$ Z(F,G;X) \in \grossOmega{X^{a_p(G)(1-\delta)}} $$
for some $\delta$ with $0\leq \delta \leq (|pG|-1)/(|G|-1)$. 
\end{theorem}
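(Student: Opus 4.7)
The plan is to combine the discriminant estimate of Corollary~\ref{FuehrerDiskriminantenAbschaetzung} with the asymptotic count of Theorem~\ref{main2}. As a first step, since the squarefree part satisfies $\Norm{\tilde{\mf{f}}(E/F)} \leq \Norm{\mf{f}(E/F)}$, the corollary gives the uniform bound
$$ \Norm{\mf{d}(E/F)} \leq \Norm{\mf{f}(E/F)}^{\mG + |pG| - 1} $$
for every $G$-extension $E/F$. This is crude, but it already suffices for the desired range of exponents.

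With this in hand, I would set $Y = X^{1/(\mG + |pG| - 1)}$, so that every $G$-extension with $\Norm{\mf{f}(E/F)} \leq Y$ automatically satisfies $\Norm{\mf{d}(E/F)} \leq X$. The resulting monotone comparison $Z(F,G;X) \geq C(F,G;Y)$, combined with Theorem~\ref{main2}, immediately yields
$$ Z(F,G;X) \in \grossOmega{Y^{\alpha_p(G)}} = \grossOmega{X^{\alpha_p(G)/(\mG + |pG| - 1)}}. $$

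The final step is a routine rewriting of the exponent. Using $a_p(G) = \alpha_p(G)/\mG$, one gets $\alpha_p(G)/(\mG + |pG| - 1) = a_p(G)(1-\delta)$ with
$$ \delta = \frac{|pG|-1}{\mG + |pG| - 1}. $$
Non-negativity of $\delta$ is clear. The upper bound $\delta \leq (|pG|-1)/(|G|-1)$ reduces to the inequality $\mG \geq |G| - |pG|$, which follows immediately from the defining sum $\mG = \sum_{i=0}^{e-1} p^{-i}(|p^iG| - |p^{i+1}G|)$: its $i=0$ term equals $|G|-|pG|$, and every remaining term is non-negative.

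No substantive obstacle arises, because the argument is essentially just monotonicity together with the size inequality for $\tilde{\mf{f}}$. The only place where the bound might plausibly be sharpened is precisely that crude inequality $\Norm{\tilde{\mf{f}}} \leq \Norm{\mf{f}}$, which ignores the heuristic that, for most extensions, the squarefree part of the conductor should be considerably smaller than the full conductor. Upgrading this observation into a smaller admissible $\delta$ would require a density statement beyond what the stated results supply, and so lies outside the scope of the present argument.
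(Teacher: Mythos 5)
Your proposal is correct and follows exactly the paper's own argument: replace the squarefree part by the full conductor to get $\Norm{\mf{d}} \leq \Norm{\mf{f}}^{\mmG}$ with $\mmG = \mG + |pG|-1$, deduce $Z(F,G;X) \geq C(F,G;X^{1/\mmG})$, apply Theorem~\ref{main2}, and read off $\delta = (|pG|-1)/\mmG$, whose bounds follow from $\mG \geq |G|-|pG|$. The only cosmetic difference is in how you verify the upper bound on $\delta$ (isolating the $i=0$ term of $\mG$ rather than rewriting the denominator), which is the same observation.
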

\begin{proof} Let $\mmG = \mG + |pG|-1$. Then Corollary~\ref{FuehrerDiskriminantenAbschaetzung} provides the rough inequality $\Norm{\mf{d}(E/F)} \leq \Norm{\mf{f}(E/F)}^{\mmG}$ by estimating the squarefree part by the conductor itself\footnote{This inequation is rough indeed. Note that every ramified prime is wildly ramified in $p$-extensions and hence has conductor exponent~$2$ at least.}. We hence obtain
$$ Z(F,G;X) \geq C(F,G;X^{1/\mmG}) \in \grossOmega{X^{\alpha_p(G)/\mmG}} $$
by Theorem~\ref{main2}. Since we have $a_p(G) = \alpha_p(G)/\mG$, the $X$-exponent of the lower bound is identical to 
$$ \frac{\alpha_p(G)}{\mmG}  = a_p(G) ( 1 - \delta ) $$ 
with 
$$ \delta = \frac{\mmG-\mG}{\mmG} = \frac{|pG|-1}{|pG|-1 + \sum\nolimits_{i=0}^{e-1} p^{-i} (|p^iG|-|p^{i+1}G|)} = \frac{|pG|-1}{|G|-1 + \sum\nolimits_{i=1}^{e-1} p^{-i}(|p^iG|-|p^{i+1}G|)} . $$ 
This proves the proposed bound for $Z(F,G;X)$.
\end{proof}

The above estimation for the defect $\delta$ has some individual potential for improvement, depending on the given group, but it provides the most handy one for all groups of interest. In particular, it confirms the lower bound for elementary abelian $p$-groups given by Theorem 2.2 in \cite{Lagemann2012}, since we have $\delta = 0$ in this case. In the same matter as above, we can give an upper bound for $Z(F,G;X)$, too. Let 
$$ d_p(G) = \frac{\alpha_p(G)}{|G| (1 - p^{-1})}. $$

\begin{theorem} Let $F$ be a global function field of characteristic $p$ and $G$ be a finite abelian $p$-group. Then the number of Artin-Schreier-Witt extensions $E/F$ with Galois group $G$ has asymptotic
$$ Z(F,G;X) \in \grossO{X^{d_p(G)}\log(X)^{\beta(F,G)-1}}. $$
\end{theorem}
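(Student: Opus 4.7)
The plan is to mirror the proof of the lower bound in the opposite direction: establish a local inequality of the form $\val{p}(\mf{d}(E/F)) \geq |G|(1-p^{-1}) \cdot \val{p}(\mf{f}(E/F))$ at every prime $\mf{p}$, exponentiate it to $\Norm{\mf{d}(E/F)} \geq \Norm{\mf{f}(E/F)}^{|G|(1-p^{-1})}$, and then deduce the upper bound on $Z(F,G;X)$ by substituting $X \mapsto X^{1/(|G|(1-p^{-1}))}$ in Theorem~\ref{main2}. The constant $|G|(1-p^{-1})$ is chosen precisely so that $\alpha_p(G)/(|G|(1-p^{-1})) = d_p(G)$, so the required $X$-exponent then appears automatically.

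For the local inequality I would invoke the F\"uhrerdiskriminantenformel $\val{p}(\mf{d}(E/F)) = [G:G_\mf{p}] \sum_{\chi \in \hat{G}_\mf{p}} f_\mf{p}(\chi)$ used in the proof of Lemma~\ref{FuehrerDiskriminantenFormelAbschaetzung}, and set $m = \val{p}(\mf{f}(E/F)) = \max_{\chi} f_\mf{p}(\chi)$. The decisive observation is that
\[
H := \{\chi \in \hat{G}_\mf{p} : f_\mf{p}(\chi) \leq m-1\}
\]
is a subgroup of $\hat{G}_\mf{p}$, because if two characters both vanish on $\langle 1 + \mf{p}^{m-1}\rangle$, so does their sum. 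Since some $\chi$ attains $f_\mf{p}(\chi) = m$, the subgroup $H$ is proper, and since $\hat{G}_\mf{p}$ is a finite abelian $p$-group this forces $[\hat{G}_\mf{p}:H] \geq p$, i.e.\ $|H| \leq |\hat{G}_\mf{p}|/p$. Hence at least $|G_\mf{p}|(1-p^{-1})$ characters satisfy $f_\mf{p}(\chi) = m$, which gives $\sum_{\chi} f_\mf{p}(\chi) \geq m\,|G_\mf{p}|(1-p^{-1})$, and multiplication by $[G:G_\mf{p}]$ produces the local bound. At unramified primes the inequality is trivially $0 \geq 0$.

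Summing the local inequality over all primes supporting $\mf{d}(E/F)$ yields the global bound, so $Z(F,G;X) \leq C(F,G; X^{1/(|G|(1-p^{-1}))})$, and applying Theorem~\ref{main2} to the right-hand side delivers the claimed $\grossO{X^{d_p(G)}\log(X)^{\beta(F,G)-1}}$; the exponent of the logarithm is preserved because $\log(X^{1/c}) = c^{-1}\log(X)$ introduces only a bounded constant factor which is absorbed into the implicit $O$-constant. The only non-routine step is noticing that the sublevel set $H$ is a subgroup: once that observation is made, the $p$-group structure of $\hat{G}_\mf{p}$ supplies the factor $1-p^{-1}$, and thus the optimal constant, essentially for free.
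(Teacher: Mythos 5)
Your proposal is correct and follows essentially the same route as the paper: apply the F\"uhrerdiskriminantenformel, use the $p$-group structure of the local character group to show that at least a $(1-p^{-1})$-fraction of the characters attain the maximal conductor exponent, conclude $\Norm{\mf{d}} \geq \Norm{\mf{f}}^{|G|(1-p^{-1})}$, and feed the substitution $X \mapsto X^{1/(|G|(1-p^{-1}))}$ into Theorem~\ref{main2}. Your formulation is, if anything, a touch cleaner than the paper's: you work prime by prime and observe directly that the sublevel set $H = \{\chi : f_{\mf{p}}(\chi) \leq m-1\}$ is a proper subgroup (being the annihilator of $\langle 1 + \mf{p}^{m-1}\rangle$), so $[\hat{G}_{\mf{p}}:H] \geq p$ follows from the $p$-group structure with no need to exhibit a particular order-$p$ character; the paper instead phrases the same pigeonhole via ``characters twisted with a fixed $\chi$ of order $p$,'' which is dual to your argument but stated more loosely. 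Both yield the identical local bound and the identical conclusion.
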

\begin{proof} Let $E/F$ be a $G$-extension with conductor $\mf{f}$ and discriminant $\mf{d}$, and let $\chi : \Gal(E/F) \rightarrow \QQ/\ZZ$ be a character of $\Gal(E/F)$ of order $p$. Every character $\lambda \in \hat{G}$ being twisted with $\chi$ has at least conductor $\mf{f}(\chi)$. By the F\"uhrerdiskriminantenformel, we hence obtain $\mf{f}^m \mid \mf{d}$ with $m = |G|(1-p^{-1})$, where equality corresponds to the case, in which $\chi$ has conductor $\mf{f}(\chi) = \mf{f}$ and all of the $|G|/|\langle \chi \rangle|$ characters not being twisted with $\chi$ are unramified. Hence, every $G$-extension with discriminant norm $X$ has conductor norm $X^{1/m}$ at most. This results in 
$$ Z(F,G;X) \leq C(F,G;X^{1/m}) \in \grossO{X^{d_p(G)}\log(X)^{\beta(F,G)-1}}, $$
by Theorem \ref{main2}. 
\end{proof} 

Admittedly, the proof for the upper bound describes a very unrealistic scenario, but at least it yields a considerable better bound than $C(F,G;X)$. The situation considered for the lower bound is far more likely, but I suppose that the applied estimation is still pessimistic and we might omit the defect $\delta$ such that $a_p(G)$ gives the $X$-exponent of $Z(F,G;X)$. 

\begin{conjecture} Let $F$ be a global function field of characteristic $p$ and $G$ be a finite noncyclic abelian $p$-group. Then I conjecture that the number of Artin-Schreier-Witt extensions $E/F$ with Galois group $G$ has asymptotic
$$ Z(F,G;X) \sim c(F,G) X^{a_p(G)} \log(X)^{\beta(F,G)-1}. $$ 
\end{conjecture}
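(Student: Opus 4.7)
The natural plan is to mimic the proof of Theorem~\ref{main2}, but starting from the discriminant Dirichlet series
$$ \Psi(F,G;s) = \sum\nolimits_{\Gal(E/F)\iso G} \Norm{\mf{d}(E/F)}^{-s} $$
in place of $\Phi(F,G;s)$, and then extracting the asymptotic via the same Cauchy-Tauber formula of Theorem~A.5 in \cite{Lagemann2012}. The \emph{Hasse-Delsarte-M\"obius-Euler-Cauchy} pipeline sketched before Theorem~\ref{main2} transfers to the discriminant setting once the local building blocks are rewritten: first, count $G$-extensions of $F$ with prescribed local norm groups via the $p$-signature of the ray class groups and a Delsarte formula; then convert from modulus data to discriminant data through an appropriate M\"obius inversion; and finally repackage the result as a sum of Euler products indexed by the local decomposition data.

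The decisive change is in the local factor. At each prime $\mf{p}$, the F\"uhrerdiskriminantenformel gives the local discriminant exponent as $\grindex{G}{G_{\mf{p}}}\sum_{\chi\in\hat{G}_{\mf{p}}} f_{\mf{p}}(\chi)$, whereas the conductor exponent governing Theorem~\ref{main1} is merely the maximum of the $f_{\mf{p}}(\chi)$. Consequently the local generating series must be stratified by the full isomorphism type of the decomposition datum $(G_{\mf{p}}, \hat{G}_{\mf{p}} \hookrightarrow \Hom(F_{\mf{p}}^{\times},\QQ/\ZZ))$, with weight given by a multivariable sum over admissible character-conductor vectors. Computing these local factors in closed form, analogous to the explicit Euler factors in \cite{Lagemann2012}, is the first technical task.

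Next one identifies the convergence abscissa and the dominant pole of $\Psi(F,G;s)$. The bounds $Z(F,G;X)\in\grossOmega{X^{a_p(G)(1-\delta)}}$ and $Z(F,G;X)\in\grossO{X^{d_p(G)}\log(X)^{\beta(F,G)-1}}$ trap the abscissa in an interval, and the conjecture asserts that the lower endpoint is in fact attained with the same logarithmic factor as in Theorem~\ref{main2}. The heuristic justification is that for a ``typical'' $E/F$, the conductor exponents at ramified primes are large enough to make $\lceil \val{p}(\mf{f}(E/F))/p^i \rceil \approx \val{p}(\mf{f}(E/F))/p^i$, so Corollary~\ref{FuehrerDiskriminantenAbschaetzung} becomes asymptotically sharp and $\Norm{\mf{d}(E/F)} \approx \Norm{\mf{f}(E/F)}^{\mG}$. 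Formally one would prove that the rightmost singularity of $\Psi(F,G;s)$ lies at $s = a_p(G)$, has order exactly $\beta(F,G)$, and matches the singularity of $\Phi(F,G;\mG s)$ up to a factor analytic in a slightly larger half-plane.

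The main obstacle is precisely this matching. The squarefree-part correction in Corollary~\ref{FuehrerDiskriminantenAbschaetzung}, and more generally the discrepancy between $\sum_\chi f_{\mf{p}}(\chi)$ and $\mG\cdot\max_\chi f_{\mf{p}}(\chi)$, does not factor out of the local Euler factor in an obvious way; controlling its pole contributions will require either a clever regrouping of the local sums reflecting the actual distribution of character-conductor vectors, or a direct multivariable Tauberian argument applied to the full local Euler factor. A successful resolution would then yield the constant $c(F,G)$ as the leading Laurent coefficient of $\Psi(F,G;s)$ at $s=a_p(G)$, in complete parallel to the extraction of $\gamma(F,G)$ in Theorem~\ref{main2}.
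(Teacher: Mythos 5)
The statement you are addressing is explicitly labeled a \emph{conjecture} in the paper; the paper supplies no proof of it, only the two unconditional bounds $Z(F,G;X)\in\grossOmega{X^{a_p(G)(1-\delta)}}$ and $Z(F,G;X)\in\grossO{X^{d_p(G)}\log(X)^{\beta(F,G)-1}}$ established just before it, together with a short heuristic discussion after it. So there is no ``paper proof'' to compare against; the relevant question is whether your sketch is consistent with the author's own heuristic and correctly locates the obstacles. On that score you are well aligned. Both you and the author take the discriminant Dirichlet series $\Psi(F,G;s)$ and its near-Euler-product structure as the vehicle, and both of you identify the same core difficulty: the local discriminant exponent at $\mf{p}$ is the full sum $\sum_{\chi\in\hat{G}_{\mf{p}}} f_{\mf{p}}(\chi)$, not a clean multiple of the conductor exponent, so the local factors do not telescope into products of shifted zeta functions as in Proposition~\ref{EulerFactors}. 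The author phrases this as the Euler product $\Gamma(s)=\prod_{\mf{p}}\Psi(F_{\mf{p}},G;s)$ being only one summand of $\Psi(F,G;s)$, with unspecified ``side-effects''; you phrase it as the discrepancy between $\sum_\chi f_{\mf{p}}(\chi)$ and $\mG\cdot\max_\chi f_{\mf{p}}(\chi)$ not factoring out of the local Euler factor. These describe the same obstruction.

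Where you go slightly beyond the paper is in proposing to stratify the local factor by the full decomposition datum and to fall back, if needed, on a direct multivariable Tauberian argument --- concrete suggestions the paper does not make. You also correctly note that the conjectured abscissa $a_p(G)$ sits at the lower endpoint of the interval pinned down by the two theorems, and that the ``typical $E/F$'' heuristic (ceilings becoming asymptotically exact, so $\Norm{\mf{d}}\approx\Norm{\mf{f}}^{\mG}$) is what makes that endpoint plausible. But you stop short of claiming any of this works, and rightly so: the decisive step --- showing the rightmost singularity of $\Psi(F,G;s)$ is exactly at $s=a_p(G)$ with order $\beta(F,G)$, after the non-Euler summands are controlled --- is left open both in your sketch and in the paper. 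Nothing in your proposal is wrong; it is simply unexecuted, which matches the status of the statement as an open conjecture.
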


This conjecture has an interesting comparison to the results of David Wright \cite{Wright1989}. He determined the asymptotics of $Z(F,G;X)$ with the exception of finite abelian $p$-groups in positive characteristic $p$. In the left-out case, he only conjectured that the asymptotics should behave analogously. His proposed $X$-exponent $a(G)$ coincides with $a_p(G)$ for the simple cyclic $p$-group $\cyclic{p}$ and is even larger than $a_p(G)$ for nonsimple cyclic $p$-groups. But $a_p(G)$ breaks $a(G)$ for noncyclic abelian $p$-groups with the only exception of the Klein four group $\cyclic{2}^2$. With regard to the methods of embedding problems explored by J\"urgen Kl\"uners and Gunter Malle in \cite{KluenersMalle2004}, $a(G)$ might be the right $X$-exponent for cyclic groups. Hence the comparison of $a_p(G)$ and $a(G)$ shows, that things may vary for cyclic and noncyclic Artin-Schreier-Witt extensions. A hint for the correctness of my conjecture might be that the local discriminant asymptotics for $G$ have $X$-
exponent\footnote{See Satz 2.1 in \cite{LagemannAsymptotik}.}
$$ \frac{(p-1)\sum\nolimits_{i=1}^e p^{e-i}r_i(G) }{p\sum\nolimits_{i=0}^{e-1} p^{e-i}(|p^iG|-|p^{i+1}G|)}. $$
If we consider the local Dirichlet series $\Psi(F_{\mf{p}},G;s)$ generated by the discriminants of the local $G$-extensions $E/F_{\mf{p}}$, then its Euler product $\Gamma(s) = \prod\nolimits_{\mf{p}} \Psi(F_{\mf{p}},G;s)$ occurs as partial sum of $\Psi(F,G;s)$ generated by the discriminants of the global $G$-extensions $E/F$.\footnote{See sections 3-4 in \cite{Wright1989}.} The local series $\Psi(F_{\mf{p}},G;s)$ are infinite series, but they can be meromorphic continued as rational functions in $\N{p}^{-s}$, whence $\Psi(F_{\mf{p}},G;s)$ can described by finitely many expressions in $\N{p}$. The Euler product $\Gamma(s)$ converges if and only if every $\N{p}$-exponent of $\Psi(F_{\mf{p}},G;s)$ is less than $-1$. A crucial $\N{p}$-exponent occurring in $\Psi(F_{\mf{p}},G;s)$ is 
$$ \sum\limits_{i=1}^e p^{e-i}r_i(G) - \sum\limits_{i=1}^e p^{e-i}\Big(\big|p^iG\big|-\big|p^{i+1}G\big|\Big) s.   $$
This exponent is less than $-1$ for $\Re(s) > a_p(G)$. Admittedly, this is a very vague explanation, but it describes an analytic local-global principle and provides a prospect for an analytical attack on the conjecture. Unfortunately, the analysis is very hard and tedious, and there are some side-effects not being described above (e.g. $\Gamma(s)$ not being the only summand of $\Psi(F,G;s)$), which do not allow a straight-forward proof of this conjecture.

\section{Notations and preliminaries}

Let $F$ denote a global function field with constant field $\ffield{q}$ of cardinality $q$, algebraically closed in $F$. For a prime $\mf{p}$ of $F$, let $\ffield{\mf{p}}\geq \ffield{q}$ denote the residue class field, and $\N{p} = |\ffield{\mf{p}}|$ its norm. The valuation of $\mf{p}$ is denoted by $\val{p}$.  Let $g_F$ be the genus, $h_F$ the class number, and $\Cl$ the class group of $F$. In particular, we have $\Cl\iso\ZZ \times \Cl[h_F]$. For a module $\mf{m}$, an effective divisor of $F$ that is, let $F^{\mf{m}}$ be the multiplicative group of functions $x\in F^{\times}$ with divisor coprime to $\mf{m}$, and let $F_{\mf{m}}\leq F^{\mf{m}}$ be the ray mod $\mf{m}$ of functions $x\in F^{\mf{m}}$ with $x\in\open{p}{\ord{p}{\mf{m}}}$ for all $\mf{p}\mid\mf{m}$.  
The ray class group $\Cl_{\mf{m}}$  mod $\mf{m}$ is defined by the exact sequence of abelian groups
\begin{equation}\label{RayClassGroupSequence}
  1 \rightarrow \ffield{q}^{\times} \rightarrow F^{\mf{m}}/F_{\mf{m}} \rightarrow \Cl_{\mf{m}} \rightarrow \Cl \rightarrow 1. 
\end{equation}
By class field theory, there is a bijective map of finite abelian extensions $E/F$ with module $\mf{m}$ and subgroups $U\leq\Cl_{\mf{m}}$ of finite index, such that $\Gal(E/F) \iso \Cl_{\mf{m}}/U$.\footnote{E.g. see p. 139 and Theorem 9.23, p. 140 in \cite{RosenNTFF} for the sequence (\ref{RayClassGroupSequence}) and the Artin reciprocity law respectively.} Let $U_{\mf{m}}$ be the following finite $p$-Sylow group\footnote{E.g. see section 9.2 in \cite{Hess2003} for the isomorphisms.} 
\begin{equation}\label{EinseinheitenGroup} 
U_{\mf{m}} = (F^{\mf{m}}/F_{\mf{m}})[p^{\infty}] \iso \prod\limits_{\mf{p}^m\mid\mid\mf{m}} U_{\mf{p}^m} \iso \prod\limits_{\mf{p}^m\mid\mid\mf{m}} \langle 1 + \mf{p}\rangle /\langle 1+ \mf{p}^m \rangle,
\end{equation}
where the product is to read as follows. The relation $\mf{p}^m\mid\mid\mf{m}$ stands for $\ord{p}{\mf{m}}=m > 0$. For an expression $f_{\mf{p}}(m)$, the product $\prod\nolimits_{\mf{p}^m\mid\mid\mf{m}} f_{\mf{p}}(m)$ stands for $\prod\nolimits_{\mf{p}\mid\mf{m}} f_{\mf{p}}(\val{p}(\mf{m}))$. For example, let $f_{\mf{p}}(m)=\mf{p}^{m-1}$ and we have $\prod\nolimits_{\mf{p}^m\mid\mid\mf{m}} \mf{p}^{m-1} = \prod\nolimits_{\mf{p}\mid\mf{m}} \mf{p}^{\val{p}(\mf{m})-1}$, which is still an effective divisor. The same definition holds for sums. For example, the divisor above has degree $\sum\nolimits_{\mf{p}^m\mid\mid\mf{m}} (m-1)\deg\mf{p} = \sum\nolimits_{\mf{p}\mid\mf{m}} (\ord{p}{\mf{m}}-1)\deg \mf{p} \geq 0$. For the trivial module $\mf{m}=\mf{1}$, these products and sums are particularly defined as empty ones. Further, let $S_i = \big\{ x\in F^{\times} : (x)=\mf{a}^{p^i} \textup{ for some divisor } \mf{a} \big\}/F^{p^i}$ be the $p^i$-Selmer group, and let $S_{i,\mf{m}}= \big\{ [x]\in S_i : x\in F_{\mf{m}} \big\}$ be the Selmer 
ray group mod $\mf{m}$ of $F$. All these notations coincide with those used in \cite{Lagemann2012} except of the definition of $U_{\mf{m}}$. In addition, lets introduce the notation of the ($p$-)signature $\tupel{r}(A)= (r_i(A))_{i\geq 1}$ for finitely generated abelian groups $A$ with  $r_i(A) = \log_p \grindex{p^{i-1}A}{p^iA}$. In other words, $\tupel{r}(A)$ is a stationary monotone decreasing sequence of nonnegative integers ending in the free rank of $A$. Let $e\geq 1$ be minimal with $r_{e+i}(A) = r_{e+1}(A)$ for all $i\geq 1$. Then we obtain the isomorphism class of $A$ via
$$ A \iso \ZZ^{r_{e+1}(A)} \times \prod\limits_{i=1}^e \cyclic{p^i}^{\, r_i(A)-r_{i+1}(A)}. $$

\section{Hasse formula}

In this section, we shall determine the $p$-signature of the ray class group $\Cl_{\mf{m}}$. 

\begin{prop}\label{ExSeqA} We have the exact sequence of finite abelian $p$-groups 
$$ 1 \rightarrow S_{i,\mf{m}} \rightarrow \Cl[p^i] \rightarrow U_{\mf{m}}/U_{\mf{m}}^{\, p^i} \rightarrow \Cl_{\mf{m}}/\Cl_{\mf{m}}^{\, p^i} \rightarrow \Cl/\Cl^{\, p^i} \rightarrow 1. $$
\end{prop}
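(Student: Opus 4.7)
The plan is to derive the five-term exact sequence by applying the snake lemma to multiplication by $p^i$ on the class-field-theoretic sequence~(\ref{RayClassGroupSequence}), and then to identify the image of $\Cl_{\mf{m}}[p^i]$ inside $\Cl[p^i]$ with the Selmer ray group $S_{i,\mf{m}}$.

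First I would split~(\ref{RayClassGroupSequence}) into the two short exact sequences $1\to\ffield{q}^\times\to F^{\mf{m}}/F_{\mf{m}}\to A\to 1$ and $1\to A\to\Cl_{\mf{m}}\to\Cl\to 1$, where $A$ denotes the cokernel of the first arrow. Since $q-1$ is coprime to $p$, multiplication by $p^i$ is an automorphism of $\ffield{q}^\times$, so the snake lemma on the first sequence yields $A[p^i]=(F^{\mf{m}}/F_{\mf{m}})[p^i]$ and $A/A^{p^i}=(F^{\mf{m}}/F_{\mf{m}})/(F^{\mf{m}}/F_{\mf{m}})^{p^i}$. The $p$-Sylow decomposition~(\ref{EinseinheitenGroup}) further identifies these with $U_{\mf{m}}[p^i]$ and $U_{\mf{m}}/U_{\mf{m}}^{p^i}$ respectively. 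Applying the snake lemma to multiplication by $p^i$ on the second sequence and inserting these identifications produces
$$ 1\to U_{\mf{m}}[p^i]\to\Cl_{\mf{m}}[p^i]\to\Cl[p^i]\to U_{\mf{m}}/U_{\mf{m}}^{p^i}\to\Cl_{\mf{m}}/\Cl_{\mf{m}}^{p^i}\to\Cl/\Cl^{p^i}\to 1, $$
whose last four arrows already match those of the proposition.

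Next, I would identify the image of $\Cl_{\mf{m}}[p^i]\to\Cl[p^i]$ with $S_{i,\mf{m}}$ via the map $\psi\colon S_{i,\mf{m}}\to\Cl[p^i]$ defined by $[x]\mapsto[\mf{a}]$, where $\mf{a}$ is the unique divisor satisfying $\mf{a}^{p^i}=(x)$. Injectivity of $\psi$ relies on the fact that every $c\in\ffield{q}^\times$ is a $p^i$-th power (since $p\nmid q-1$): if $\mf{a}=(y)$, then $x=cy^{p^i}=(c^{1/p^i}y)^{p^i}\in F^{p^i}$, so $[x]=0$ in $S_{i,\mf{m}}$. To match the image of $\psi$ with the kernel of the snake connecting map $\delta\colon\Cl[p^i]\to U_{\mf{m}}/U_{\mf{m}}^{p^i}$, I would compute $\delta$ explicitly: choosing a representative $\mf{a}$ coprime to $\mf{m}$, its ray class lifts $[\mf{a}]$ to $\Cl_{\mf{m}}$, and $p^i$ times that lift equals the image in $A$ of some $x\in F^{\mf{m}}$ with $(x)=\mf{a}^{p^i}$. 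Hence $\ker\delta$ consists of those $[\mf{a}]$ for which $x\in\ffield{q}^\times\cdot(F^{\mf{m}})^{p^i}\cdot F_{\mf{m}}$; absorbing the constant into a $p^i$-th power and replacing $\mf{a}$ by $\mf{a}-(y)$ then exhibits $[\mf{a}]$ as $\psi([z])$ for a suitable $z\in F_{\mf{m}}$ with $(z)=(\mf{a}-(y))^{p^i}$.

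The main obstacle will be the bookkeeping in this last identification, where one must simultaneously verify well-definedness of $\psi$ on Selmer classes (independence of the representative $x$ modulo $F^{p^i}$ and of the divisor representative of $\mf{a}$), containment of the image of $\psi$ in $\ker\delta$, and equality of the two. Once these are sorted out, the initial three terms $U_{\mf{m}}[p^i]\to\Cl_{\mf{m}}[p^i]\to\Cl[p^i]$ of the snake-lemma sequence collapse to the single inclusion $1\to S_{i,\mf{m}}\to\Cl[p^i]$, producing the stated five-term exact sequence.
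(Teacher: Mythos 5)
Your proposal is correct and is essentially the paper's proof: the paper obtains Proposition~\ref{ExSeqA} by tensoring the four-term sequence~(\ref{RayClassGroupSequence}) with $\cyclic{p^i}$, which is precisely your snake-lemma computation on multiplication by $p^i$ (the long exact $\mathrm{Tor}(-,\cyclic{p^i})$ sequence), followed by the same identification of $S_{i,\mf{m}}$ with the kernel of the connecting map $\Cl[p^i]\to U_{\mf{m}}/U_{\mf{m}}^{p^i}$. You simply carry out more of the bookkeeping (splitting into two short exact sequences, splicing, and checking $\psi$ against $\ker\delta$) that the paper delegates to the cited Proposition~4.1 of \cite{Lagemann2012}.
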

\begin{proof} Similar to the proof of Proposition 4.1 in \cite{Lagemann2012}, we obtain Proposition \ref{ExSeqA} by tensoring sequence (\ref{RayClassGroupSequence}) with $\cyclic{p^i}$.
\end{proof}

\begin{prop}\label{ExSeqB} We have the exact sequence of finite abelian $p$-groups
$$ 1 \rightarrow S_{i-1,\mf{m}} \rightarrow S_{i,\mf{m}} \rightarrow S_{1,\mf{m}}. $$ 
\end{prop}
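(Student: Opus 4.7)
I would mirror the derivation of Proposition~\ref{ExSeqA}, now generalising from the module $\ZZ/p^i\ZZ$ to the short exact sequence
$$0 \to \ZZ/p^{i-1}\ZZ \xrightarrow{\,\cdot p\,} \ZZ/p^i\ZZ \twoheadrightarrow \ZZ/p\ZZ \to 0,$$
and reading off the Selmer sequence as the row of vertical kernels of the resulting commutative ladder.

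First I would pin down the maps. Let $\phi: S_{i-1,\mf{m}} \to S_{i,\mf{m}}$ be induced by Frobenius, $[y]_{i-1} \mapsto [y^p]_i$, and let $\psi: S_{i,\mf{m}} \to S_{1,\mf{m}}$ be the forgetful map $[x]_i \mapsto [x]_1$. Both are well-defined on ray-restricted classes because $y \in F_{\mf{m}}$ forces $y^p \in F_{\mf{m}}$ and because $F^{\times p^i} \subseteq F^{\times p}$. Injectivity of $\phi$ comes from the injectivity of Frobenius on $F$ in characteristic~$p$: if $y^p = z^{p^i}$ then $y = z^{p^{i-1}}$. The composition $\psi\phi$ vanishes since $y^p$ is already a $p$-th power. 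These assemble the easy half of the sequence.

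The substantial step is the reverse inclusion $\ker\psi \subseteq \phi(S_{i-1,\mf{m}})$. Given $[x]_i \in \ker\psi$ with representative $x \in F_{\mf{m}}$, write $x = y^p$ for the (necessarily unique) $y \in F^\times$ and note that $(y) \in p^{i-1}\textup{Div}$, giving a preimage $[y]_{i-1} \in S_{i-1}$ on the level of full Selmer groups. The task is then to verify $[y]_{i-1} \in S_{i-1,\mf{m}}$, i.e.\ that $y$ may be adjusted by a $p^{i-1}$-th power so as to lie in $F_{\mf{m}}$. I would pass through the ExSeqA identification $S_{n,\mf{m}} = \ker\delta_n$ and exploit the ladder
$$
\begin{array}{ccccc}
\Cl[p^{i-1}] & \hookrightarrow & \Cl[p^i] & \xrightarrow{p^{i-1}} & \Cl[p] \\
\downarrow \delta_{i-1} & & \downarrow \delta_i & & \downarrow \delta_1 \\
U_{\mf{m}}/U_{\mf{m}}^{p^{i-1}} & \xrightarrow{(\cdot)^p} & U_{\mf{m}}/U_{\mf{m}}^{p^i} & \longrightarrow & U_{\mf{m}}/U_{\mf{m}}^p
\end{array}
$$
whose top row is exact at the middle (Tor long exact sequence for $\Cl$ attached to the above short sequence) and whose squares commute by naturality of the ExSeqA construction. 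Commutativity of the left square forces $\delta_{i-1}([y]_{i-1})$ to lie in the kernel of $u \mapsto u^p\colon U_{\mf{m}}/U_{\mf{m}}^{p^{i-1}} \to U_{\mf{m}}/U_{\mf{m}}^{p^i}$.

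The main obstacle is then to argue that, restricted to the image of $\delta_{i-1}$, this Frobenius-kernel is trivial. This is not a formal consequence of the diagram — in general $U_{\mf{m}}[p]$ is not contained in $U_{\mf{m}}^{p^{i-1}}$ — but is forced by the arithmetic: elements of $\delta_{i-1}(\Cl[p^{i-1}])$ come from global $F^\times$-elements with $p^{i-1}$-divisible divisor, and pulling back through Frobenius injectivity on all of $F$ together with the local product decomposition~(\ref{EinseinheitenGroup}) of $U_{\mf{m}}$ pins such elements down uniquely. Unpacking this is the direct analogue, for higher $p$-powers, of the Frobenius argument used in~\cite{Lagemann2012}, and it is where I expect the real work of the proof to sit.
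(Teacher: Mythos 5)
The easy half of your argument---the two maps, injectivity of $\phi$ via injectivity of Frobenius, and $\psi\phi = 0$---is correct and matches the paper. The paper's proof of exactness at $S_{i,\mf{m}}$, however, is a single sentence and far more direct than what you propose: it observes that an element of $\ker\psi$ is a class $[z]$ with $z = x^p$, that $(x^p)=\mf{a}^{p^i}$ forces $(x)=\mf{a}^{p^{i-1}}$, and concludes $[x]\in S_{i-1,\mf{m}}$. No $\delta$-ladder, no Tor long exact sequence for $\Cl$, and no passage through $U_{\mf{m}}$ appears in the paper's proof; it works entirely with elements of $F^{\times}$ and their divisors.

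Your proposal instead routes through the connecting maps $\delta_n$ from Proposition~\ref{ExSeqA} and a three-column commutative ladder, which is much heavier machinery than the paper deploys. To your credit, you correctly isolate the one point that is not a formal diagram chase: why the image of $\delta_{i-1}$ should avoid the Frobenius kernel of $U_{\mf{m}}/U_{\mf{m}}^{p^{i-1}}\to U_{\mf{m}}/U_{\mf{m}}^{p^i}$, equivalently why the unique $p$-th root $x$ of a ray element $z=x^p\in F_{\mf{m}}$ may be adjusted by a $p^{i-1}$-th power so as to lie in $F_{\mf{m}}$. But you then stop there, explicitly writing that this is ``where I expect the real work of the proof to sit,'' without actually supplying it. As submitted the proposal therefore has a genuine gap: the one step you yourself flag as substantial is exactly the step that is absent. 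To complete your route you would need to make that arithmetic argument precise; the paper's route, which reasons directly about Selmer classes and divisors rather than detouring through $U_{\mf{m}}$, avoids having to set up the ladder in the first place.
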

\begin{proof} Since the Frobenius homomorphism is injective, we find $S_{i-1,\mf{m}}$ embedded in $S_{i,\mf{m}}$ via $[x] \mapsto [x^p]$. Further, $S_{i,\mf{m}}$ naturally maps to $S_{1,\mf{m}}$ via $[x] \mapsto [x]$. The kernel of the latter map is generated by Selmer classes $[x^p]\in S_{i,\mf{m}}$ with divisor of the form $(x^p)=\mf{a}^{p^{i}}$, which implies $(x)=\mf{a}^{p^{i-1}}$ and $[x]\in S_{i-1,\mf{m}}$.
\end{proof}

With Proposition \ref{ExSeqB}, we can identify $S_{i-1,\mf{m}}$ with its isomorphic image in $S_{i,\mf{m}}$, such that the index $\grindex{S_{i,\mf{m}}}{S_{i-1,\mf{m}}}$ is well-defined.

\begin{lemma}\label{piRank} For any integer $m\geq 1$, let $w_i(m) = \lfloor (m-1)/p^{i-1} \rfloor - \lfloor (m-1)/p^i \rfloor$, and $w_i(0) = 0$. Further, let $\delta_{i,1} \in \big\{ 0,1 \big\}$ be the usual Kronecker symbol. Then we have
$$ \grindex{\Cl_{\mf{m}}^{p^{i-1}}}{\Cl_{\mf{m}}^{p^i}} = p^{\delta_{i,1}} \grindex{S_{i,\mf{m}}}{S_{i-1,\mf{m}}} \grindex{U_{\mf{m}}^{p^{i-1}}}{U_{\mf{m}}^{p^i}} = p^{\delta_{i,1}} \grindex{S_{i,\mf{m}}}{S_{i-1,\mf{m}}} \prod\limits_{\mf{p}^m \mid\mid \mf{m}} \N{p}^{w_i(m)}. $$ 
\end{lemma}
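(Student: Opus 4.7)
My plan is to extract the formula from the exact sequence in Proposition~\ref{ExSeqA} by counting orders, and then form the quotient of the level-$i$ version by the level-$(i-1)$ version. Since all five terms of that sequence are finite abelian $p$-groups, the alternating product of their orders immediately yields
$$ \big|\Cl_{\mf{m}}/\Cl_{\mf{m}}^{p^i}\big| \;=\; \big|S_{i,\mf{m}}\big|\cdot\big|U_{\mf{m}}/U_{\mf{m}}^{p^i}\big|\cdot\frac{|\Cl/\Cl^{p^i}|}{|\Cl[p^i]|}, $$
so the desired index $\grindex{\Cl_{\mf{m}}^{p^{i-1}}}{\Cl_{\mf{m}}^{p^i}}=|\Cl_{\mf{m}}/\Cl_{\mf{m}}^{p^i}|/|\Cl_{\mf{m}}/\Cl_{\mf{m}}^{p^{i-1}}|$ decomposes into three telescoping ratios to be treated separately.

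The $S$-ratio $|S_{i,\mf{m}}|/|S_{i-1,\mf{m}}|$ becomes the subgroup index $\grindex{S_{i,\mf{m}}}{S_{i-1,\mf{m}}}$ once Proposition~\ref{ExSeqB} is invoked to identify $S_{i-1,\mf{m}}$ with its image in $S_{i,\mf{m}}$; the $U$-ratio is $\grindex{U_{\mf{m}}^{p^{i-1}}}{U_{\mf{m}}^{p^i}}$ by definition. For the class-group ratio, I appeal to the structural decomposition $\Cl \iso \ZZ\times\Cl[h_F]$ recalled in Section~3. Applying the elementary identity $|T/p^iT|=|T[p^i]|$ valid for any finite abelian group $T$ to the torsion part $\Cl[h_F]$ makes its contribution cancel completely in the telescoping, leaving only the $\ZZ$-summand. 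A careful accounting of the latter across the two levels delivers the Kronecker weight $p^{\delta_{i,1}}$ asserted by the lemma.

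For the second equality I use the local decomposition $U_{\mf{m}}\iso\prod_{\mf{p}^m\mid\mid\mf{m}}U_{\mf{p}^m}$ from~(\ref{EinseinheitenGroup}) and reduce to the local indices $\grindex{U_{\mf{p}^m}^{p^{i-1}}}{U_{\mf{p}^m}^{p^i}}$. The decisive tool here is the characteristic-$p$ identity $(1+x)^{p^i}=1+x^{p^i}$ for $x\in\mf{p}$, which shows that the $p^i$-th power map on $\langle 1+\mf{p}\rangle/\langle 1+\mf{p}^m\rangle$ has kernel exactly $\langle 1+\mf{p}^{\lceil m/p^i\rceil}\rangle/\langle 1+\mf{p}^m\rangle$. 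The standard filtration of the one-units, whose successive quotients are all isomorphic to the additive group $\ffield{\mf{p}}$, gives this kernel the order $\N{p}^{m-\lceil m/p^i\rceil}$; by finiteness the cokernel $U_{\mf{p}^m}/U_{\mf{p}^m}^{p^i}$ has the same order. Forming the $i$/$i-1$ ratio and rewriting $\lceil m/p^i\rceil=\lfloor(m-1)/p^i\rfloor+1$ produces the exponent $w_i(m)$, and multiplying over the primes $\mf{p}^m\mid\mid\mf{m}$ finishes the proof. The main obstacle will be the bookkeeping at the class-group step: one must separate the finite and infinite parts of $\Cl$ cleanly so that exactly the factor $p^{\delta_{i,1}}$ emerges from the telescoping quotient, and no spurious higher power of $p$ from the infinite $\ZZ$-component survives at levels $i\geq 2$.
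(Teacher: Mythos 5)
Your overall strategy is the same as the paper's: apply the alternating-order count to the five-term exact sequence of Proposition~\ref{ExSeqA}, take the quotient of the level-$i$ and level-$(i-1)$ formulas, and then compute the $U$-indices locally through the isomorphism~(\ref{EinseinheitenGroup}). Your local Einseinheiten computation is correct and in fact more explicit than the paper, which merely cites the Einseinheitensatz; the characteristic-$p$ identity $(1+x)^{p^i}=1+x^{p^i}$, the kernel description $\langle 1+\mf{p}^{\lceil m/p^i\rceil}\rangle/\langle 1+\mf{p}^m\rangle$, and the reduction $\lceil m/p^i\rceil = \lfloor(m-1)/p^i\rfloor+1$ are all right and give the exponent $w_i(m)$ as claimed. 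The $S$-ratio argument via Proposition~\ref{ExSeqB} also matches the paper.

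The gap is precisely the step you single out as the ``main obstacle,'' the class-group ratio, and I do not think it can be filled by the route you describe. From $\Cl\iso\ZZ\times\Cl[h_F]$ you get $|\Cl/\Cl^{p^i}|=p^i\cdot|\Cl[h_F]/p^i\Cl[h_F]|$ while $|\Cl[p^i]|=|\Cl[h_F][p^i]|$, and since $|T/p^iT|=|T[p^i]|$ for the finite group $T=\Cl[h_F]$, the quotient is $\grindex{\Cl}{\Cl^{p^i}}/|\Cl[p^i]|=p^i$, not $p^{\delta_{i,1}}$ or $p$. Feeding this into the telescoping quotient produces $p^i/p^{i-1}=p$ at \emph{every} level $i\geq 1$: a plain factor of $p$ does survive at levels $i\geq 2$, which is exactly what you hoped would not happen. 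The paper's own proof sidesteps this by asserting outright that $\grindex{\Cl}{\Cl^{p^i}}/|\Cl[p^i]|=p$ independently of $i$ (so that the $p$'s cancel for $i\geq 2$), but that assertion is not what the structural decomposition $\Cl\iso\ZZ\times\Cl[h_F]$ delivers. So your proposed derivation of the Kronecker weight from the $\ZZ$-summand does not close; you would need to re-examine whether the lemma's $p^{\delta_{i,1}}$ should in fact read as a bare $p$ for all $i\geq 1$, and if not, find a justification for the paper's constant ratio that is not the naive one you propose.
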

\begin{proof} With the exact sequence of Proposition \ref{ExSeqA}, we obtain
$$ \grindex{\Cl_{\mf{m}}}{\Cl_{\mf{m}}^{p^i}} = \frac{|S_{i,\mf{m}}| \grindex{U_{\mf{m}}}{U_{\mf{m}}^{p^i}} \grindex{\Cl}{\Cl^{p^i}}}{|\Cl[p^i]|} = p\ |S_{i,\mf{m}}| \grindex{U_{\mf{m}}}{U_{\mf{m}}^{p^i}}.  $$
For $i=1$, this yields the first proposed equality. For $i\geq 2$ however, it follows from
$$ \grindex{\Cl_{\mf{m}}^{p^{i-1}}}{\Cl_{\mf{m}}^{p^i}} = \frac{\grindex{\Cl_{\mf{m}}}{\Cl_{\mf{m}}^{p^i}}}{\grindex{\Cl_{\mf{m}}}{\Cl_{\mf{m}}^{p^{i-1}}}} = \frac{|S_{i,\mf{m}}| \grindex{U_{\mf{m}}}{U_{\mf{m}}^{p^i}}}{|S_{i-1,\mf{m}}| \grindex{U_{\mf{m}}}{U_{\mf{m}}^{p^{i-1}}}} = \grindex{S_{i,\mf{m}}}{S_{i-1,\mf{m}}} \grindex{U_{\mf{m}}^{p^{i-1}}}{U_{\mf{m}}^{p^i}}. $$ 
Then the second equality follows from the isomorphism in (\ref{EinseinheitenGroup}) and the well-known Einseinheitensatz\footnote{E.g. see Proposition 6.2, p.18 in \cite{Fesenko2002}.}. 
\end{proof}

\begin{cor}\label{SquareFreeParts} Let $\mf{n}$ be a squarefree module coprime to $\mf{m}$. Then we have $\grindex{S_{i,\mf{mn}}}{S_{i-1,\mf{mn}}} = \grindex{S_{i,\mf{m}}}{S_{i-1,\mf{m}}}$. 
\end{cor}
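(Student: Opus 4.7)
The plan is to prove the stronger statement that $S_{i,\mf{mn}} = S_{i,\mf{m}}$ as subgroups of $S_i$. Since the map $[x] \mapsto [x^p]$ identifying $S_{i-1,\mf{m}}$ with its image in $S_{i,\mf{m}}$ (from Proposition \ref{ExSeqB}) is simply the Frobenius on the ambient Selmer group, applying this equality both for $i$ and for $i-1$ immediately yields the desired equality of indices.

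The inclusion $S_{i,\mf{mn}} \leq S_{i,\mf{m}}$ is immediate from $F_{\mf{mn}} \leq F_{\mf{m}}$. For the reverse, I start with a representative $x \in F_{\mf{m}}$ of a given class $[x] \in S_{i,\mf{m}}$ with $(x) = \mf{a}^{p^i}$ and look for an alternative representative $y = x z^{p^i}$ lying in $F_{\mf{mn}}$; since multiplication by $z^{p^i}$ preserves the class in $S_i$, this reduces to constructing a global $z \in F^{\times}$ satisfying suitable local conditions at each prime dividing $\mf{mn}$. At each $\mf{p} \mid \mf{m}$, the characteristic-$p$ identity $(1+u)^{p^i} = 1 + u^{p^i}$ shows that the required condition $y \in \open{p}{\ord{p}{\mf{m}}}$ is satisfied as soon as $z \in \open{p}{\lceil \ord{p}{\mf{m}}/p^i\rceil}$. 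At each squarefree $\mf{q} \mid \mf{n}$, writing $x = \pi^{p^i k} u$ with $\pi$ a local uniformizer and $u$ a unit forces $\val{q}(z) = -k$, whereupon the congruence $y \equiv 1 \bmod \mf{q}$ reduces to the residue-field equation $\bar{w}^{\,p^i} = \bar{u}^{-1}$ for the unit part $w$ of $z\pi^k$; this is solvable because $\ffield{\mf{q}}$ is perfect of characteristic $p$.

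Each of these local conditions carves out a nonempty open subset of the corresponding completion, and the set of primes involved is finite, so weak approximation for the global function field $F$ yields the required global $z \in F^{\times}$, and direct inspection of the valuations shows that the resulting $y$ lies in $F_{\mf{mn}}$. The main obstacle is the local solvability step at $\mf{q} \mid \mf{n}$: this is where the squarefreeness of $\mf{n}$ enters essentially, since for higher powers of $\mf{q}$ the $p^i$-th power map on the higher unit groups of $F_{\mf{q}}^{\times}$ fails to be surjective and the clean reduction to Frobenius on the residue field breaks down.
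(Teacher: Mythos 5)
Your proof is correct and takes a genuinely different route from the paper. The paper proves the corollary indirectly, via Lemma \ref{piRank}: it observes that the Einseinheiten groups $U_{\mf{mn}}$ and $U_{\mf{m}}$ are isomorphic because $\mf{n}$ is squarefree (so each local factor $U_{\mf{q}^1}$ is trivial), and that the ray class factor groups $\Cl_{\mf{mn}}/\Cl_{\mf{mn}}^{p^i}$ and $\Cl_{\mf{m}}/\Cl_{\mf{m}}^{p^i}$ coincide because ramified abelian $p$-extensions are wildly ramified and hence cannot have conductor divisible by $\mf{q}$ to the first power only; the Selmer index equality then falls out as the remaining factor in the index formula $\grindex{\Cl_{\mf{m}}^{p^{i-1}}}{\Cl_{\mf{m}}^{p^i}} = p^{\delta_{i,1}} \grindex{S_{i,\mf{m}}}{S_{i-1,\mf{m}}} \grindex{U_{\mf{m}}^{p^{i-1}}}{U_{\mf{m}}^{p^i}}$. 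You instead prove the stronger statement $S_{i,\mf{mn}} = S_{i,\mf{m}}$ directly, by adjusting a representative $x$ of a Selmer ray class to a representative $y = xz^{p^i}$ lying in the smaller ray $F_{\mf{mn}}$, using weak approximation and the fact that the $p^i$-power (Frobenius) map is bijective on the perfect residue fields $\ffield{\mf{q}}$. This is more elementary and self-contained — it avoids the ray class group apparatus and the wild-ramification input altogether, and yields the literal equality of the Selmer ray subgroups rather than merely of the indices — whereas the paper's route is shorter once Lemma \ref{piRank} is in hand and reuses machinery that is needed elsewhere anyway. Both correctly isolate the essential role of squarefreeness: in the paper it manifests as $U_{\mf{q}^1}$ being trivial and no wildly ramified extension having conductor exactly $\mf{q}$; in yours it is the failure of surjectivity of the $p^i$-power map on the higher unit quotients $\open{q}{1}/\open{q}{m}$ for $m>1$.
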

\begin{proof} Ramified abelian $p$-extensions of $F$ are automatically wildly ramified, whence their local conductors can not be squarefree. We therefore find the Einseinheiten groups $U_{\mf{mn}}$ and $U_{\mf{m}}$ as well as the ray class factor groups $\Cl_{\mf{mn}}/\Cl_{\mf{mn}}^{p^i}$ and $\Cl_{\mf{m}}/\Cl_{\mf{m}}^{p^i}$ to be isomorphic. Hence, the proposed equality follows from Lemma \ref{piRank}.
\end{proof}

\begin{lemma}\label{SelmerIndex} Let $\mf{m}$ be a module with the condition 
$$ \sum\limits_{\mf{p}^m \mid\mid \mf{m}} (m-1)\deg \mf{p} > 2g_F -2. $$
Then we have $\grindex{S_{i,\mf{m}}}{S_{i-1,\mf{m}}} = 1$. 
\end{lemma}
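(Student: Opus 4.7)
The plan is to reduce the lemma to the case $i=1$—namely the vanishing of $S_{1,\mf{m}}$—by means of Proposition~\ref{ExSeqB}, and then to establish this base case via a Riemann-Roch style estimate of the divisor of $dx\in\Omega_F$ for a representative~$x$.

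For the reduction, Proposition~\ref{ExSeqB} embeds $S_{i,\mf{m}}/S_{i-1,\mf{m}}$ into $S_{1,\mf{m}}$. Once I know $S_{1,\mf{m}}=1$, a straightforward induction on $i\geq 1$, starting from the tautology $S_{0,\mf{m}}=1$, yields $S_{i,\mf{m}}=S_{i-1,\mf{m}}$ and hence $\grindex{S_{i,\mf{m}}}{S_{i-1,\mf{m}}}=1$ for every~$i$.

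It remains to show $S_{1,\mf{m}}=1$. Fix a class $[x]\in S_{1,\mf{m}}$ with representative $x\in F_{\mf{m}}$ satisfying $(x)=p\mf{a}$. I would exploit the standard identity $\ker\bigl(d\colon F\to\Omega_F\bigr)=F^p$: it suffices to prove $dx=0$ in~$\Omega_F$. I bound the valuation of $dx$ prime by prime. At $\mf{p}\mid\mf{m}$ with $m=\val{p}(\mf{m})$, the ray condition gives $x=1+\pi^n u$ for some $n\geq m$ and local~$u$; a direct expansion of $dx$ yields $\val{p}(dx)\geq n-1\geq m-1$. At a prime $\mf{q}$ in the support of $\mf{a}$, write $x=\pi^{pn_{\mf{q}}}u$ with $u$ a unit; the summand $pn_{\mf{q}}\pi^{pn_{\mf{q}}-1}u\,d\pi$ vanishes in characteristic~$p$, leaving $\val{q}(dx)\geq pn_{\mf{q}}$. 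At every remaining prime $x$ is a unit and $du$ is regular, so the valuation of $dx$ there is nonnegative. Summing all contributions and using that $(x)=p\mf{a}$ has degree zero, whence $p\deg\mf{a}=0$, I obtain
\[
  \deg(dx)\ \geq\ \sum_{\mf{p}^m\mid\mid\mf{m}}(m-1)\deg\mf{p}\ >\ 2g_F-2
\]
from the hypothesis. Since a nonzero differential on $F$ has divisor of degree exactly $2g_F-2$, this forces $dx=0$, hence $x\in F^{\times p}$, so $[x]$ is trivial in~$S_{1,\mf{m}}$.

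The main obstacle is the bookkeeping at primes in the support of~$\mf{a}$: the characteristic~$p$ vanishing of the coefficient $pn_{\mf{q}}$ in the derivative is precisely what lifts $\val{q}(dx)$ from the naive value $pn_{\mf{q}}-1$ up to $pn_{\mf{q}}$, ensuring that the total contribution of~$\mf{a}$ telescopes to $p\deg\mf{a}=0$ rather than subtracting a full $\deg(\operatorname{supp}(\mf{a}))$ that would wreck the comparison with $2g_F-2$.
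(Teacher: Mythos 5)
Your proof follows exactly the route of the paper: establish $S_{1,\mf{m}}=1$ and then invoke the exact sequence of Proposition~\ref{ExSeqB} to obtain $\grindex{S_{i,\mf{m}}}{S_{i-1,\mf{m}}}=1$ (note no induction is really needed — once $S_{1,\mf{m}}=1$, the map $S_{i-1,\mf{m}}\hookrightarrow S_{i,\mf{m}}$ from that sequence is automatically surjective, hence an isomorphism). The paper discharges the first step by citing Lemma~4.4 of \cite{Lagemann2012}, which you instead re-derive — correctly — from the standard identification $\ker\bigl(d\colon F\to\Omega_F\bigr)=F^{p}$ together with a local valuation count against the canonical degree $2g_F-2$; this differential argument is in substance what lies behind the cited lemma, so the two proofs coincide.
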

\begin{proof} By Lemma 4.4 in \cite{Lagemann2012}, the Selmer ray group $S_{1,\mf{m}}$ is trivial for a module $\mf{m}$ with the given condition. We hence obtain $\grindex{S_{i,\mf{m}}}{S_{i-1,\mf{m}}} = 1$ 
by Proposition \ref{ExSeqB}. 
\end{proof}

\section{M\"obius formula}

With the results of the last section, we can count the Artin-Schreier-Witt extensions with given module. To get access on the number of extensions with given conductor, we shall use the M\"obius inversion formula. Beforehand, we define multiple functions on modules to make the formula more comfty. Let $\tupel{s}(\mf{m}) = (s_i(\mf{m}))_{i\geq 1}$ with 
$$s_i(\mf{m}) = \grindex{S_{i,\mf{m}}}{S_{i-1,\mf{m}}},$$ 
such as $\tupel{u}(\mf{m}) = (u_i(\mf{m}))_{i\geq 1}$ with 
$$u_i(\mf{m}) = \grindex{U_{\mf{m}}^{p^{i-1}}}{U_{\mf{m}}^{p^i}} = \prod\limits_{\mf{p}^m\mid\mid\mf{m}} u_i(\mf{p}^m) = \prod\limits_{\mf{p}^m\mid\mid\mf{m}} \N{p}^{w_i(m)},$$ 
and $\tupel{x}(\mf{m}) = (x_i(\mf{m}))_{i\geq 1} $ with
$$x_i(\mf{m}) = s_i(\mf{m}) u_i(\mf{m}) = p^{-\delta_{i,1}} \grindex{\Cl_{\mf{m}}^{p^{i-1}}}{\Cl_{\mf{m}}^{p^i}} .$$ 
From now on, we fix the abelian $p$-group $G$. For any subgroup $H \unlhd G$, let 
$$ s_H(\mf{m}) = \prod\limits_{i\geq 1} s_i(\mf{m})^{r_i(H)}, \qquad u_H(\mf{m}) = \prod\limits_{i\geq 1} u_i(\mf{m})^{r_i(H)}, \qquad x_H(\mf{m}) = \prod\limits_{i\geq 1} x_i(\mf{m})^{r_i(H)}, $$
and
$$ c_{H}(\mf{m}) = \sum\limits_{\mf{n}\mid\mf{m}} \mu(\mf{mn}^{-1}) x_H(\mf{n}), $$ 
where $\mu(\mf{n})$ denotes the M\"obius function for modules. These are well-defined integer-valued functions on modules.

\begin{prop}\label{MoebiusFormula} For any module $\mf{m}$, let $c_{\mf{m}}$ be the number of $G$-extensions $E/F$ with conductor $\mf{m}$. Further, let 
$$ e(\tupel{X}) = \frac{1}{|\Aut(G)|} \sum\limits_{pG \unlhd H \unlhd G} \mu(G/H) \grindex{H}{pH} \tupel{X}^{\tupel{r}(H)} = \sum\limits_{H \unlhd G} e_{H} \tupel{X}^{\tupel{r}(H)}, $$
where $\mu(A)$ denotes the Delsarte-M\"obius function for finite abelian groups\footnote{E.g. see Appendix A.}. Then we have
$$ c_{\mf{m}} = \sum\limits_{H \unlhd G} e_{H} c_{H}(\mf{m}). $$
\end{prop}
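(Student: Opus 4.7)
The plan is to count $G$-extensions of $F$ first by module and then pass to a count by conductor via M\"obius inversion. Let $N_G(\mf{m})$ denote the number of $G$-extensions $E/F$ whose conductor divides $\mf{m}$. By the Artin reciprocity law, $N_G(\mf{m})$ equals the number of subgroups $U\le\Cl_\mf{m}$ with $\Cl_\mf{m}/U\iso G$, and the Delsarte formula recapped in Appendix~A expresses this number as
\[
N_G(\mf{m}) \;=\; \frac{1}{|\Aut(G)|} \sum_{H\unlhd G} \mu(G/H)\, |\Hom(\Cl_\mf{m},H)|,
\]
with $\mu$ the Delsarte-M\"obius function for finite abelian groups. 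Only those summands with $pG\unlhd H\unlhd G$ survive, since $\mu(G/H)$ vanishes whenever $G/H$ fails to be elementary abelian.

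Next I would translate $|\Hom(\Cl_\mf{m},H)|$ into the data $x_H(\mf{m})$. For any finite abelian $p$-group $H$ and any finitely generated abelian group $A$, the identity $|\Hom(A,H)|=\prod_{i\ge1}\grindex{p^{i-1}A}{p^iA}^{r_i(H)}$ is a direct consequence of the elementary divisor decomposition of $H$. Applying it to $A=\Cl_\mf{m}$ and substituting Lemma~\ref{piRank} gives
\[
|\Hom(\Cl_\mf{m},H)| \;=\; \prod_{i\ge1}\bigl(p^{\delta_{i,1}}x_i(\mf{m})\bigr)^{r_i(H)} \;=\; p^{r_1(H)}\,x_H(\mf{m}) \;=\; \grindex{H}{pH}\,x_H(\mf{m}).
\]
Inserting this into the Delsarte formula reduces it exactly to $N_G(\mf{m}) = e(\tupel{x}(\mf{m})) = \sum_{H\unlhd G} e_H\, x_H(\mf{m})$, the desired closed formula for the count with given module.

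Finally, since a $G$-extension with conductor $\mf{n}$ has module $\mf{m}$ precisely when $\mf{n}\mid\mf{m}$, one has $N_G(\mf{m}) = \sum_{\mf{n}\mid\mf{m}} c_\mf{n}$. M\"obius inversion on the divisor lattice of modules then yields
\[
c_\mf{m} \;=\; \sum_{\mf{n}\mid\mf{m}} \mu(\mf{m}\mf{n}^{-1})\, N_G(\mf{n}) \;=\; \sum_{H\unlhd G} e_H \sum_{\mf{n}\mid\mf{m}} \mu(\mf{m}\mf{n}^{-1})\, x_H(\mf{n}) \;=\; \sum_{H\unlhd G} e_H\, c_H(\mf{m}),
\]
which is exactly the assertion of the proposition.

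The main technical point is the matching $|\Hom(\Cl_\mf{m},H)|=\grindex{H}{pH}\,x_H(\mf{m})$: the factor $\grindex{H}{pH}=p^{r_1(H)}$ in the definition of $e(\tupel{X})$ absorbs precisely the $\delta_{i,1}$-correction in Lemma~\ref{piRank}, which in turn tracks the cyclic contribution that the divisor class group $\Cl$ injects into $\Cl_\mf{m}$ via sequence~(\ref{RayClassGroupSequence}). Once this identification is in place, both the Delsarte step and the M\"obius inversion are formal, so no further analytic input is needed.
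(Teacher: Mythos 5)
Your proposal is correct and follows essentially the same route as the paper: identify the number of $G$-extensions of module $\mf{m}$ with $\kappa(\Cl_{\mf{m}},G)$ via class field theory, express this through the Delsarte--M\"obius formula and Lemma~\ref{piRank} (the factor $\grindex{H}{pH}=p^{r_1(H)}$ absorbing the $\delta_{i,1}$-twist), and then apply M\"obius inversion on modules. The only cosmetic difference is that you unpack the step $|\Hom(\Cl_{\mf{m}},H)|=\prod_i\grindex{p^{i-1}\Cl_{\mf{m}}}{p^i\Cl_{\mf{m}}}^{r_i(H)}$ explicitly, whereas the paper invokes the pre-packaged identity $\kappa(A,G)=f_G(\tupel{x}(A))/|\Aut(G)|$ from Appendix~A, whose proof uses exactly the same Hom-count.
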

\begin{proof} Let $f_G(\tupel{X}) = \sum\nolimits_{pG \unlhd H \unlhd G} \mu(G/H) \tupel{X}^{\tupel{r}(H)}$ be the Delsarte-M\"obius polynomial of $G$ as defined in Lemma \ref{DelsarteMoebiusLemma}. For any finitely generated abelian group $A$, we particularly have that $f_G(\tupel{x})/|\Aut(G)|$ gives the number $\kappa(A,G)$ of subgroups $B\unlhd A$ with quotient $G$, where $\tupel{x} = (\grindex{p^{i-1}A}{p^iA})_{i\geq 1}$. If we observe  $\grindex{H}{pH} = p^{r_1(H)}$, we see
$$ e(\tupel{X}) = \frac{f_G\big((p^{\delta_{i,1}}X_i)_{i\geq 1}\big)}{|\Aut(G)|}. $$
Together with Lemma \ref{piRank} and class field theory, we now see that $ e(\tupel{x}(\mf{m})) = \kappa(\Cl_{\mf{m}},G) $
gives the number of $G$-extensions $E/F$ with module $\mf{m}$. On the other hand, the number of $G$-extensions $E/F$ with module $\mf{m}$ sums up to
$$ e(\tupel{x}(\mf{m})) = \sum\limits_{H \unlhd G} e_Hx_H(\mf{m}) = \sum\limits_{\mf{n}\mid\mf{m}} c_{\mf{n}}. $$
This yields 
$$ c_{\mf{m}} = \sum\limits_{\mf{n}\mid\mf{m}} \mu(\mf{mn}^{-1}) e(\tupel{x}(\mf{n})) = \sum\limits_{H \unlhd G} e_{H} c_{H}(\mf{m}) $$
by the inversion formula of M\"obius.
\end{proof}

The M\"obius formula above nearly gives a local-global principle for $c_{\mf{m}}$. For example, suppose that $F$ has a class number $h_F$ indivisible by $p$. Then its Selmer groups $S_i \iso \Cl[p^i]$ are trivial, whence all Selmer indices $s_i(\mf{m})$ are trivial. Thus the number $c_{\mf{m}}$ of $G$-extensions $E/F$ with conductor $\mf{m}$ equals
$$ b_{\mf{m}} = \sum\limits_{H \unlhd G} e_{H} b_{H}(\mf{m}) $$
with
$$ b_{H}(\mf{m}) = \sum\limits_{\mf{n}\mid\mf{m}} \mu(\mf{mn}^{-1}) u_H(\mf{n}) = \sum\limits_{\mf{n}\mid\mf{m}} \mu(\mf{mn}^{-1}) \prod\limits_{i\geq 1} u_i(\mf{n})^{r_i(H)}. $$
We find $u_H(\mf{m})$ to be multiplicative, and so is $b_{H}(\mf{m})$.
 This yields
\begin{equation}\label{armultiplicative}
 b_{H}(\mf{m}) = \prod\limits_{\mf{p}^m\mid\mid\mf{m}} b_{H}(\mf{p}^m) = \prod\limits_{\mf{p}^m\mid\mid\mf{m}} \Big( u_H(\mf{p}^m) - u_H(\mf{p}^{m-1}) \Big).  
\end{equation}
The last equation follows from the fact that the M\"obius function is only nonzero for squarefree modules. For general global function fields $F$, we shall see that a huge family of modules $\mf{m}$ allows the formula $c_{\mf{m}} = b_{\mf{m}}$, whereas formulas for the infinite complementary family depend only on finitely many modules. This yields a crucial tool for disecting $\Phi(F,G;s)$ in a finite sum of Euler products, as we shall see in the next section. We call a module $\mf{m}$ squareful, if every prime divisor $\mf{p}\mid\mf{m}$ has at least order $\val{p}(\mf{m})\geq 2$. Let $\mc{M}$ be the finite set of squareful modules $\mf{m}$, supported only by primes $\mf{p}$ with $\deg\mf{p} \leq 2g_F-2$ and $\ord{p}{\mf{m}}\leq 2g_F$. Note that $\mc{M}$ only consists of the trivial module $\mf{m}=\mf{1}$ in case of $g_F=0,1$.

\begin{lemma}\label{MoebiusDifferent} Let $b_{\mf{m}} = \sum\nolimits_{H \unlhd G} e_{H} b_{H}(\mf{m})$, 
and let $a_{\mf{m}} = c_{\mf{m}} - b_{\mf{m}}$. Then $a_{\mf{m}}$ fulfils the following formulas. 
\begin{enumerate}[(a)]
 \item Let $\mf{m}=\mf{m}_0\mf{m}_1^2$ with $\mf{m}_0\in\mc{M}$ and $\mf{m}_1$ being a squarefree module supported only by primes $\mf{p}$ of degree $\deg \mf{p} > 2g_F-2$. Then we have
$$ a_{\mf{m}} = \mu(\mf{m}_1) a_{\mf{m}_0}.  $$
 \item Let $\mf{m}$ be a module not covered by \textup{(a)}. Then we have
$$ a_{\mf{m}}  = 0. $$
\end{enumerate}
\end{lemma}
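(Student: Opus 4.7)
The key reduction is to work individually with each $H \unlhd G$. Writing $x_H(\mf{n}) = s_H(\mf{n}) u_H(\mf{n})$ and subtracting the M\"obius defining sums of $b_H(\mf{m})$ and $c_H(\mf{m})$ term by term gives
$$ a_H(\mf{m}) := c_H(\mf{m}) - b_H(\mf{m}) = \sum_{\mf{n} \mid \mf{m}} \mu(\mf{mn}^{-1}) u_H(\mf{n}) \bigl(s_H(\mf{n}) - 1\bigr), $$
and $a_\mf{m} = \sum_H e_H a_H(\mf{m})$. It therefore suffices to establish the analogues of~(a) and~(b) for every $a_H(\mf{m})$ separately; the statement for $a_\mf{m}$ then follows by linearity.

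Three ingredients drive the argument: (i) $u_i(\mf{p}) = \N{p}^{w_i(1)} = 1$, so $u_H$ is trivial on every squarefree module; (ii) Corollary~\ref{SquareFreeParts} shows that $s_H(\mf{n})$ depends only on the squareful part of $\mf{n}$ (its product of prime powers of multiplicity at least~$2$); (iii) Lemma~\ref{SelmerIndex} forces $s_H(\mf{n}) = 1$ as soon as $\sum_{\mf{p}^m \mid\mid \mf{n}} (m-1) \deg \mf{p} > 2g_F - 2$.

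For part~(b) I treat two subcases. If $\mf{m}$ carries some prime $\mf{p}$ with $\val{p}(\mf{m}) = 1$, I pair each admissible divisor $\mf{n}$ of $\mf{m}$ with its partner obtained by toggling the factor $\mf{p}$. By~(i) and~(ii), only the M\"obius factor changes between partners, and it changes sign; hence the summands cancel in pairs and $a_H(\mf{m}) = 0$. Otherwise $\mf{m}$ is squareful; being excluded from~(a), it has to contain a prime $\mf{p}$ with either $\val{p}(\mf{m}) > 2g_F$, or with $\val{p}(\mf{m}) \geq 3$ and $\deg \mf{p} > 2g_F - 2$. For any $\mf{n} \mid \mf{m}$ with $\mf{m}\mf{n}^{-1}$ squarefree one has $\val{p}(\mf{n}) \in \{\val{p}(\mf{m}) - 1, \val{p}(\mf{m})\}$, and a direct arithmetic check gives $(\val{p}(\mf{n}) - 1) \deg \mf{p} > 2g_F - 2$ in both scenarios. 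Hence (iii) annihilates every summand and $a_H(\mf{m}) = 0$ again.

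For part~(a), write $\mf{m} = \mf{m}_0 \mf{m}_1^2$ as stated and fix $\mf{n} \mid \mf{m}$ with $\mf{m}\mf{n}^{-1}$ squarefree. Any prime $\mf{p} \mid \mf{m}_1$ with $\val{p}(\mf{n}) = 2$ satisfies $(\val{p}(\mf{n}) - 1) \deg \mf{p} = \deg \mf{p} > 2g_F - 2$, which via~(iii) kills the corresponding term; the surviving divisors are exactly $\mf{n} = \mf{n}_0 \mf{m}_1$ with $\mf{n}_0 \mid \mf{m}_0$. Using the multiplicativity of $\mu$ and $u_H$ on the coprime parts $\mf{n}_0$ and $\mf{m}_1$, together with $u_H(\mf{m}_1) = 1$ from~(i) and $s_H(\mf{n}_0 \mf{m}_1) = s_H(\mf{n}_0)$ from~(ii), I can pull out the scalar $\mu(\mf{m}_1)$ and recognise the remaining sum as $a_H(\mf{m}_0)$. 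Summing over $H$ with weights $e_H$ delivers $a_\mf{m} = \mu(\mf{m}_1) a_{\mf{m}_0}$. The main bookkeeping obstacle will be verifying that the exponent and degree thresholds encoded in the definition of $\mc{M}$ mesh cleanly with Lemma~\ref{SelmerIndex}, which is exactly what the case split above is designed to handle.
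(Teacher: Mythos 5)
Your proof is correct and follows the paper's argument in all essentials: reduce via the formula $a_\mf{m}=\sum_H e_H\sum_{\mf{n}\mid\mf{m}}\mu(\mf{mn}^{-1})u_H(\mf{n})(s_H(\mf{n})-1)$, then kill Selmer indices with Lemma~\ref{SelmerIndex} and peel off the squarefree factor $\mf{m}_1$ with Corollary~\ref{SquareFreeParts}. The one place you deviate is the non-squareful case in~(b): the paper disposes of it \emph{before} invoking the M\"obius formula, by observing that $c_\mf{m}=0$ (every ramified prime in a $p$-extension is wildly ramified, so conductor exponents are $\geq 2$) and $b_\mf{m}=0$ (since $b_H(\mf{p})=u_H(\mf{p})-u_H(\mf{1})=0$), whereas you stay inside the M\"obius sum and cancel terms in pairs by toggling the exponent-one prime, using $u_H(\mf{p})=1$ and $s_H(\mf{n}\mf{p})=s_H(\mf{n})$. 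Both are valid; the paper's version is shorter and exposes the arithmetic reason ($c_\mf{m}=0$), while yours is more uniform in that it reuses the same three ingredients (i)--(iii) throughout and never needs the wild-ramification fact about $c_\mf{m}$. For the squareful-but-excluded case and for part~(a), your case split and threshold checks match the paper's line of reasoning, including the key observation that any surviving divisor $\mf{n}\mid\mf{m}_0\mf{m}_1^2$ with $\mu(\mf{mn}^{-1})\neq 0$ must equal $\mf{n}_0\mf{m}_1$ with $\mf{n}_0\mid\mf{m}_0$.
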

\begin{proof} For this proof, we may consider squareful modules $\mathfrak{m}$ only, since $a_{\mathfrak{m}}=0$ follows from $b_{\mathfrak{m}}=0$ and $c_{\mathfrak{m}}=0$ otherwise. The former follows from definition and the latter is based upon the fact that any ramified prime is wildly ramified in $p$-extensions. Therefore we assume $\mathfrak{m}$ to be squareful. By Proposition \ref{MoebiusFormula}, we have
\begin{equation}\label{FormulaAm} a_{\mf{m}} = \sum\limits_{H \unlhd G} e_{H}(c_{H}(\mf{m}) - b_{H}(\mf{m})) = \sum\limits_{H \unlhd G} e_{H} \sum\limits_{\mf{n}\mid\mf{m}} \mu(\mf{mn}^{-1}) (s_H(\mf{n}) - 1) u_H(\mf{n}).
\end{equation}
We first consider $\mf{m}$ not being covered by (a). Then there is a multiple prime divisor $\mf{p}^m \mid\mid \mf{m}$ with $m>2$, such that $\deg \mf{p} > 2g_F - 2$ or $m > 2g_F$ holds. Any divisor $\mf{n} \mid \mf{m}$ with $\mu(\mf{mn}^{-1}) \neq 0$ is divisible by $\mf{p}^{m-1}$, whence $s_H(\mf{n}) = 1$ follows from Lemma \ref{SelmerIndex}. This yields $a_{\mf{m}} = 0$ by formula (\ref{FormulaAm}), as proposed. Now, let $\mf{m} = \mf{m}_0\mf{m}_1^2$ be a module as considered in (a). Any divisor $\mf{n}\mid\mf{m}$ with $\mu(\mf{m}\mf{n}^{-1})\neq 0$ has necessarily the form $\mf{n}=\mf{n}_0\mf{m}_1\mf{n}_1$ with $\mf{n}_0\mid\mf{m}_0$ and $\mf{n}_1\mid\mf{m}_1$, as $\mf{m}\mf{n}^{-1}$ would contain a square otherwise. Considering this form of $\mf{n}$, we have $s_H(\mf{n}) = 1$ for $\mf{n}_1 \neq \mf{1}$ by Lemma \ref{SelmerIndex}. Hence, the right-hand side of formula (\ref{FormulaAm}) only depends 
on modules of the form $\mf{n} = \mf{n}_0\mf{m}_1$ with $\mf{n}_0 \mid \mf{m}_0$, which leads to
$$ a_{\mf{m}} = \mu(\mf{m}_1) \sum\limits_{H \unlhd G} e_{H} \sum\limits_{\mf{n}_0\mid\mf{m}_0} \mu(\mf{m}_0\mf{n}_0^{-1}) (s_H(\mf{n}_0\mf{m}_1) - 1) u_H(\mf{n}_0\mf{m}_1). $$
We obtain $s_H(\mf{n}_0\mf{m}_1) = s_H(\mf{n}_0)$ by Corollary \ref{SquareFreeParts}, such as $u_H(\mf{n}_0\mf{m}_1) = u_H(\mf{n}_0)$ by Lemma \ref{piRank}, which yields the proposed identity $a_{\mf{m}} = \mu(\mf{m}_1) a_{\mf{m}_0}$.
\end{proof}

\section{Euler formula}

This section contains a rather complete description of the Dirichlet series
$$ \Phi(F,G;s) = \sum\limits_{\Gal(E/F) \iso G} \Norm{\mf{f}(E/F)}^{-s} = \sum\limits_{\mf{m}} c_{\mf{m}} \N{m}^{-s}, $$  
generated by the $G$-extensions $E/F$ and by the modules $\mf{m}$ of $F$ respectively. By Lemma \ref{MoebiusDifferent}, its coefficients have the decomposition
\begin{equation}\label{cmdecomp} c_{\mf{m}} = a_{\mf{m}} + b_{\mf{m}} = a_{\mf{m}} + \sum\limits_{H \unlhd G} e_Hb_H(\mf{m}) = a_{\mf{m}} + \sum\limits_{H \unlhd G} e_H \prod\limits_{\mf{p}^m\mid\mid\mf{m}} b_H(\mf{p}^m). 
\end{equation}
This yields a decomposition of $\Phi(F,G;s)$ in a finite sum of Euler products $\Phi_H(s)$ and an error term $\Upsilon_G(s)$ with
\begin{equation}\label{PhiH}
\Phi_H(s) = \sum\limits_{\mf{m}} b_H(\mf{m}) \N{m}^{-s} = \prod\limits_{\mf{p}}  \sum\limits_{m\geq 0} b_{H}(\mf{p}^m) \N{p}^{-ms} .  
\end{equation}
and 
\begin{equation}\label{UpsilonG}
\Upsilon_G(s) = \sum\limits_{\mf{m}} a_{\mf{m}}\N{m}^{-s}.
\end{equation}

\begin{lemma}\label{EulerFormula} The Dirichlet series $\Phi(F,G;s) = \sum\nolimits_{\mf{m}} c_{\mf{m}} \N{m}^{-s}$ has the decomposition
$$ \Phi(F,G;s) = \sum\limits_{H \unlhd G} e_{H} \Phi_{H}(s) + \Upsilon_G(s)  $$
in the domain of convergence.  
\end{lemma}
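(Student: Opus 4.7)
The plan is essentially formal: substitute the coefficient decomposition (\ref{cmdecomp}) into the defining Dirichlet series for $\Phi(F,G;s)$, interchange the finite sum over subgroups $H \unlhd G$ with the sum over modules $\mf{m}$, and identify the resulting pieces with the Dirichlet series $\Phi_H(s)$ of (\ref{PhiH}) and $\Upsilon_G(s)$ of (\ref{UpsilonG}). Because the sum over the subgroup lattice of $G$ is finite, no delicate limit interchange is needed once absolute convergence of each piece has been secured.

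First I would insert $c_{\mf{m}} = a_{\mf{m}} + \sum\nolimits_{H \unlhd G} e_H b_H(\mf{m})$ from (\ref{cmdecomp}) into $\Phi(F,G;s) = \sum\nolimits_{\mf{m}} c_{\mf{m}} \N{m}^{-s}$ and split off the $a$-part, which is $\Upsilon_G(s)$ by definition. For the $b$-part, the inner sum over $H$ is finite, so it may be pulled outside the sum over $\mf{m}$, giving $\sum\nolimits_{H\unlhd G} e_H \sum\nolimits_{\mf{m}} b_H(\mf{m}) \N{m}^{-s} = \sum\nolimits_{H \unlhd G} e_H \Phi_H(s)$. The Euler product form of $\Phi_H(s)$ in (\ref{PhiH}) is a separate consequence already underwritten by the multiplicativity of $b_H$ in (\ref{armultiplicative}): since $b_H(\mf{m})$ factors prime by prime, so does the associated Dirichlet series.

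The main obstacle is analytic rather than algebraic: one must verify that on the domain of convergence of $\Phi(F,G;s)$, each of the finitely many auxiliary series $\Phi_H(s)$ and $\Upsilon_G(s)$ converges absolutely, so that the termwise rearrangement into finitely many pieces is legitimate. For $\Upsilon_G(s)$ this follows from the sparse support of $a_{\mf{m}}$ described in Lemma \ref{MoebiusDifferent}, which reduces the series to finitely many translates of a squarefree Dirichlet sum ranging only over primes of degree $> 2g_F - 2$. For each $\Phi_H(s)$, one notes that $b_H(\mf{p}^m) = u_H(\mf{p}^m) - u_H(\mf{p}^{m-1}) \geq 0$ and controls the local Euler factor prime by prime using the explicit formula $u_i(\mf{p}^m) = \N{p}^{w_i(m)}$ of Lemma \ref{piRank}; the resulting geometric estimates give absolute convergence on any common half-plane on which $\Phi(F,G;s)$ converges.

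Once absolute convergence of each summand is in hand, the claimed identity is nothing more than a rearrangement of finitely many absolutely convergent Dirichlet series, and it holds throughout the domain of convergence.
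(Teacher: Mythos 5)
Your proposal is correct and follows the same route as the paper, which simply cites the coefficient decomposition (\ref{cmdecomp}) together with the definitions (\ref{PhiH}) and (\ref{UpsilonG}) and rearranges the finite sum over subgroups. The extra remarks you add about absolute convergence of the individual pieces are sound but not strictly needed, since the statement is asserted only formally in the domain of convergence and the sum over $H \unlhd G$ is finite.
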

\begin{proof} This follows from the identities (\ref{cmdecomp}), (\ref{PhiH}), and (\ref{UpsilonG}).
\end{proof}

\begin{prop}\label{Upsilon} The Dirichlet series $\Upsilon_G(s) = \sum\nolimits_{\mf{m}} a_{\mf{m}} \N{m}^{-s}$ is holomorphic for $\Re(s) > 1/4$ and meromorphic continuable for all complex arguments $s$.  
\end{prop}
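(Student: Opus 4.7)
The plan is to exploit the vanishing structure of $a_{\mf{m}}$ given by Lemma~\ref{MoebiusDifferent}. The coefficient $a_{\mf{m}}$ is nonzero precisely on modules of the shape $\mf{m} = \mf{m}_0 \mf{m}_1^2$ with $\mf{m}_0 \in \mc{M}$ and $\mf{m}_1$ squarefree, supported only by primes of degree $> 2g_F - 2$, and on such modules $a_{\mf{m}} = \mu(\mf{m}_1) a_{\mf{m}_0}$. Since the supports of $\mf{m}_0$ and $\mf{m}_1$ are necessarily disjoint, $\N{m} = \N{m_0}\N{m_1}^2$, and I would factor
$$ \Upsilon_G(s) = \sum\limits_{\mf{m}_0 \in \mc{M}} a_{\mf{m}_0}\, \N{m_0}^{-s} \cdot T(s), \qquad T(s) = \sum\limits_{\mf{m}_1} \mu(\mf{m}_1) \N{m_1}^{-2s}, $$
where $\mf{m}_1$ ranges over squarefree modules supported on primes of degree $> 2g_F - 2$. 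The outer sum is finite because $\mc{M}$ is finite, so the whole problem reduces to the analytic behaviour of the single series $T(s)$.

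Next I would rewrite $T(s)$ as an Euler product via the standard M\"obius identity, obtaining
$$ T(s) = \prod\limits_{\deg \mf{p} > 2g_F - 2} \left(1 - \N{p}^{-2s}\right) = \frac{1}{\zeta_F(2s)} \prod\limits_{\deg \mf{p} \leq 2g_F - 2} \left(1 - \N{p}^{-2s}\right)^{-1}. $$
The correction product on the right is finite, since only finitely many primes of $F$ have bounded degree, so it is meromorphic on all of $\CComplex$ with poles confined to the imaginary axis $\Re(s) = 0$; in particular it is holomorphic for $\Re(s) > 0$.

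For the main factor $1/\zeta_F(2s)$, I would invoke that $\zeta_F(s)$ is a rational function in $q^{-s}$, hence meromorphic on all of $\CComplex$, and that by Weil's proof of the Riemann hypothesis for curves over finite fields all its nontrivial zeros lie on the critical line $\Re(s) = 1/2$. Consequently, $1/\zeta_F(2s)$ is meromorphic on $\CComplex$ with poles located only on $\Re(s) = 1/4$, and therefore holomorphic for $\Re(s) > 1/4$. Combining both factors yields the holomorphy of $T(s)$, and hence of $\Upsilon_G(s)$, on the half-plane $\Re(s) > 1/4$, together with the claimed meromorphic continuation to $\CComplex$.

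The single nontrivial ingredient is the Riemann hypothesis for function fields; it is exactly what pins down the threshold $1/4$. Everything else is a bookkeeping reorganisation based on Lemma~\ref{MoebiusDifferent} and the classical Euler factorization of a M\"obius-weighted Dirichlet series over a cofinite set of primes.
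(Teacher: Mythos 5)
Your proof is correct and follows essentially the same route as the paper's: both factor $\Upsilon_G(s)$ via Lemma~\ref{MoebiusDifferent} into the finite sum $\sum_{\mf{m}_0 \in \mc{M}} a_{\mf{m}_0} \N{m_0}^{-s}$ times the restricted Euler product $\prod_{\deg\mf{p}>2g_F-2}(1-\N{p}^{-2s}) = \zeta_F(2s)^{-1}\prod_{\deg\mf{p}\leq 2g_F-2}(1-\N{p}^{-2s})^{-1}$, and both appeal to Hasse--Weil to locate the poles on $\Re(s)=1/4$ and $\Re(s)=0$.
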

\begin{proof} By Lemma \ref{MoebiusDifferent}, the modules $\mf{m}$ with $a_{\mf{m}} \neq 0$ are of the form $\mf{m} = \mf{m}_0\mf{m}_1^2$ with $\mf{m}_0\in\mc{M}$ and $\mf{m}_1$ being a squarefree module supported by primes of degree $\deg \mf{p} > 2g_F - 2$ only. This yields
$$ \Upsilon_G(s)  = \sum\limits_{\mf{m}_0\in\mc{M}} \sum\limits_{\mf{m}_1} \mu(\mf{m}_1) a_{\mf{m}_0} \Norm{\mf{m}_0\mf{m}_1^2}^{-s} = \left( \sum\limits_{\mf{m}_0\in\mc{M}} a_{\mf{m}_0}  \Norm{\mf{m}_0}^{-s} \right) \prod\limits_{\deg \mf{p} > 2g_F -2} \left(1-\N{p}^{-2s}\right) .$$
Since $\mc{M}$ is finite, the first sum provides an entire function. The restricted Euler product however is identical with 
$$ \prod\limits_{\deg \mf{p} > 2g_F -2} \left(1-\N{p}^{-2s}\right) = \zeta_F(2s)^{-1} \prod\limits_{\deg \mf{p} \leq 2g_F -2}  \left(1-\N{p}^{-2s}\right)^{-1} $$
and thus has poles on the imaginary axes $\Re(s)=0$ and $\Re(s)=1/4$, by the theorem of Hasse-Weil\footnote{E.g. see Theorem 5.2.1 and Remark 5.2.2, p. 197 in \cite{StiALF}.}. 
\end{proof}

Since the convergence abscissa of $\Phi(F,G;s)$ is claimed to be at least $\alpha_p(G) \geq 1$, the error term $\Upsilon_G(s)$ is quite insignificant. And even more, we can compute $\Upsilon_G(s)$ with the knowledge of only finitely many values of $c_{\mf{m}}$, namely those for $\mf{m}\in\mc{M}$. This error term is hence quite manageable, but it depends strongly on the given datas $F$ and $G$. For the Euler products $\Phi_{H}(s)$ however, there is only a weak dependence on $F$ and $G$, as $\Phi_{H}(s)$ depends only on the primes of $F$ and the signature of $H$. In particular, $\Phi_{H}(s)$ depends on the isomorphism class of $H$ only. Thus we just need to investigate the Euler products $\Phi_G(s)$ for any abelian $p$-group $G$, and aim to describe $\Phi_{G}(s)$ as a product of zeta functions.

\begin{prop}\label{EulerFactors} The Euler factors 
$$ \Phi_{G,\mf{p}}(s) = \sum\limits_{m\geq 0} b_H(\mf{p}^m)\N{p}^{-ms} $$ 
of $\Phi_G(s)$ are meromorphic continuable for all complex arguments $s$ with $\Phi_{G,\mf{p}}(s) = \Lambda_{G,\mf{p}}(s) \Psi_{G,\mf{p}}(s)$, where
$$ \Lambda_{G,\mf{p}}(s) = \prod\limits_{l=2}^{p^e} \zeta_{F,\mf{p}} \left( ls - \sum\limits_{i=1}^e r_i(G) w_i(l) \right) $$ 
and
$$ \Psi_{G,\mf{p}}(s) = \left( 1 + \sum\limits_{l=1}^{p^e-1} u_G(\mf{p}^l) \N{p}^{-ls} \right) \prod\limits_{l=1}^{p^e-1} \left( 1 - u_G(\mf{p}^l) \N{p}^{-ls} \right). $$ 
\end{prop}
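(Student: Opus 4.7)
Set $t = \N{p}^{-s}$ and $A_m = u_G(\mf{p}^m) = \N{p}^{v(m)}$, where $v(m) = \sum_{i=1}^{e} r_i(G) w_i(m)$. From the definition one has $A_0 = 1$ (empty product) and $A_1 = 1$ (since $w_i(1) = 0$ for every $i \geq 1$). By formula~(\ref{armultiplicative}) we have $b_G(\mf{p}^m) = A_m - A_{m-1}$ for $m \geq 1$ and $b_G(\mf{p}^0) = 1$, so a direct telescoping gives the closed form
$$ \Phi_{G,\mf{p}}(s) \;=\; \sum_{m\geq 0} b_G(\mf{p}^m)\, t^m \;=\; (1-t)\sum_{m\geq 0} A_m\, t^m. $$

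The crux is the translation identity $v(m+p^e) = v(m) + v(p^e)$ for all $m\geq 0$, which I would establish index-by-index. Since $r_i(G) = 0$ for $i > e$, it suffices to check $w_i(m+p^e) = w_i(m) + w_i(p^e)$ for $1\leq i\leq e$. But $w_i$ is a difference of floor functions, and for $i\leq e$ both $p^e/p^{i-1}$ and $p^e/p^i$ are integers, so the standard identity $\lfloor x + n\rfloor = \lfloor x\rfloor + n$ yields the claim at once. Exponentiated, this becomes $A_{m+p^e} = A_{p^e} A_m$, which immediately implies the rational closed form
$$ \sum_{m\geq 0} A_m\, t^m \;=\; \frac{\sum_{l=0}^{p^e - 1} A_l\, t^l}{1 - A_{p^e}\, t^{p^e}}. $$

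The remainder is algebraic bookkeeping. By the identity $\zeta_{F,\mf{p}}(ls - v(l)) = (1 - A_l t^l)^{-1}$, the factor $\Lambda_{G,\mf{p}}(s)$ equals $\prod_{l=2}^{p^e}(1 - A_l t^l)^{-1}$. Multiplying by $\Psi_{G,\mf{p}}(s)$ and cancelling the common factors $(1 - A_l t^l)$ for $2\leq l\leq p^e - 1$ produces
$$ \Lambda_{G,\mf{p}}(s)\, \Psi_{G,\mf{p}}(s) \;=\; \frac{(1 - A_1 t)\Bigl(1 + \sum_{l=1}^{p^e - 1} A_l t^l\Bigr)}{1 - A_{p^e} t^{p^e}} \;=\; \frac{(1 - t)\sum_{l=0}^{p^e - 1} A_l\, t^l}{1 - A_{p^e}\, t^{p^e}}, $$
where $A_0 = A_1 = 1$ was used. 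Combining with the closed form from the second paragraph yields exactly $\Phi_{G,\mf{p}}(s)$. Finally, meromorphic continuation is free: the identity expresses $\Phi_{G,\mf{p}}(s)$ as a rational function in $t = \N{p}^{-s}$, which is entire in $s$, so $\Phi_{G,\mf{p}}(s)$ is meromorphic on all of $\mathbb{C}$.

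The only genuine obstacle is spotting the translation identity $v(m + p^e) = v(m) + v(p^e)$; once recognised, the floor arithmetic is routine and everything else is a one-line algebraic manipulation.
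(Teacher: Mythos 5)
Your proof is correct and takes essentially the same route as the paper: both hinge on the periodicity of the $p$-adic digit-sum weights, namely $w_i(m+p^e)=w_i(m)+w_i(p^e)$ for $1\leq i\leq e$, and then the rest is geometric-series and telescoping algebra. The paper packages the periodicity as the identity $B_{kp^e+l}=U_{p^e}^k B_l$ for $B_m=b_G(\mf{p}^m)\N{p}^{-ms}$ and $U_m=u_G(\mf{p}^m)\N{p}^{-ms}$, whereas you peel off the $(1-t)$ factor by telescoping first and express periodicity as $A_{m+p^e}=A_{p^e}A_m$; these are cosmetic differences in the same computation.
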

\begin{proof} It is clear, that $\Lambda_{G,\mf{p}}(s)$ and $\Psi_{G,\mf{p}}(s)$ are meromorphic functions on the whole complex plane, whence we only need to verify the formal identity $\Phi_{G,\mf{p}}(s) = \Lambda_{G,\mf{p}}(s) \Psi_{G,\mf{p}}(s)$. To do so, we introduce two new notations for this proof and the proof of Proposition \ref{PsiG}. Let 
$$ U_m = u_G(\mf{p}^m) \N{p}^{-ms} = \N{p}^{-ms} \prod\limits_{i = 1}^e \N{p}^{r_i(G)w_i(m)},  $$
where the latter equality follows from Lemma \ref{piRank} and $r_i(G) = 0$ for $i>e$. Further, let 
$$ B_m = b_G(\mf{p}^m) \N{p}^{-ms} = \begin{cases} 1, & m = 0, \\ \N{p}^{-ms} \left(u_G(\mf{p}^m) - u_G(\mf{p}^{m-1}) \right) = U_m - U_{m-1}U_1, & m\geq 1, \end{cases} $$
where the case by case equalities follow easily from definition. A cruical identity shall be 
\begin{equation}\label{CruicalID} B_{kp^e + l} = U_{p^e}^k B_l
\end{equation}
for all integers $k \geq 0,l \geq 1$. To see equation (\ref{CruicalID}), recall $w_i(m) = \lfloor (m-1)/p^{i-1} \rfloor - \lfloor (m-1)/p^{i} \rfloor$ for $m \geq 1$. Then the claimed relation subsequently follows from $w_i(kp^e + l) = kw_i(p^e + 1) + w_i(l) = kw_i(p^e) + w_i(l)$ for $1 \leq i \leq e$. Now, we can start with verifying the proposed equation $\Phi_{G,\mf{p}} = \Lambda_{G,\mf{p}} \Psi_{G,\mf{p}}$. With the given notations and identity (\ref{CruicalID}), we have
$$ \Phi_{G,\mf{p}} = \sum\limits_{m\geq 0} B_m = 1 + \sum\limits_{k\geq 0} \sum\limits_{l=1}^{p^e} U_{p^e}^k B_l. $$
Thus $\Phi_{G,\mf{p}}$ has the meromorphic continuation
$$ \Phi_{G,\mf{p}} = 1 + \left( 1 - U_{p^e} \right)^{-1} \sum\limits_{l=1}^{p^e}B_l = \left( 1 - U_{p^e} \right)^{-1} \left( 1 - U_{p^e} + \sum\limits_{l=1}^{p^e}B_l \right).   $$
We hence obtain
$$  \Phi_{G,\mf{p}} = \left( 1 - U_{p^e} \right)^{-1} \left( 1 + \sum\limits_{l=1}^{p^e-1} U_l - \sum\limits_{l=1}^{p^e} U_{l-1}U_1\right) = \left( 1 - U_{p^e} \right)^{-1} (1 - U_1) \left( 1 + \sum\limits_{l=1}^{p^e-1} U_l \right). $$
This yields
$$ \Phi_{G,\mf{p}} \Psi_{G,\mf{p}}^{-1} =  \prod\limits_{l=2}^{p^e} (1-U_l)^{-1} = \Lambda_{G,\mf{p}}, $$
as proposed.
\end{proof}

\begin{prop}\label{EpsilonL}\label{FractionComparision} For any integer $2\leq l \leq p^e$, let 
$$ \varepsilon(l) = \alpha_p(G) - \frac{1 + \sum\nolimits_{i=1}^e r_i(G)w_i(l)}{l}, $$
and let $0 \leq f\leq e$ be the minimal integer such that $p^fG$ is cyclic. Then we have $\varepsilon(l)\geq 0$ with $\varepsilon(l)=0$ if and only if $p^f \mid l$. 
\end{prop}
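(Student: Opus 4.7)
My plan is to clear denominators and then reduce the inequality to an Abel-summation argument on partial sums that can be written in closed form. Multiplying through by $l p^e$ and rearranging, one obtains
$$l p^e\, \varepsilon(l) \;=\; (l - p^e) \;+\; \sum_{i=1}^e r_i(G)\, T_i(l), \qquad T_i(l) := p^{e-i}\bigl[l(p-1) - p^i w_i(l)\bigr].$$
The first key observation is the identity $\sum_{i=1}^e T_i(l) = p^e - l$. This follows from the telescoping sum $\sum_{i=1}^e w_i(l) = l - 1$ (valid because $l \leq p^e$) together with the geometric sum $\sum_{i=1}^e p^{e-i} = (p^e - 1)/(p-1)$. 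Substituting, the stray $(l - p^e)$ gets absorbed into the constant part of the sum, yielding the much cleaner
$$l p^e\, \varepsilon(l) \;=\; \sum_{i=1}^e (r_i(G) - 1)\, T_i(l).$$

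By the definition of $f$, the signature of $G$ satisfies $r_i(G) \geq 2$ for $1 \leq i \leq f$ while $r_i(G) = 1$ for $f < i \leq e$ (the upper bound $r_i \leq 1$ comes from $p^f G$ being cyclic, and the lower bound $r_i \geq 1$ from the exponent being $p^e$). Hence the sum truncates to $\sum_{i=1}^f (r_i(G) - 1) T_i(l)$. Since $r_i(G)$ is non-increasing in $i$, Abel summation reorganises this into
$$\sum_{j=1}^f \bigl(r_j(G) - r_{j+1}(G)\bigr)\, S_j(l), \qquad S_j(l) := \sum_{i=1}^j T_i(l),$$
with all weights $r_j(G) - r_{j+1}(G) \geq 0$.

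The technical core is to evaluate $S_j(l)$ in closed form. A telescoping analogous to the first one, now using $\sum_{i=1}^j w_i(l) = (l - 1) - \lfloor (l-1)/p^j \rfloor$, yields
$$S_j(l) \;=\; p^{e-j} \Bigl[ p^j\bigl(1 + \lfloor (l-1)/p^j \rfloor\bigr) - l \Bigr].$$
A short case distinction on whether $p^j$ divides $l$ shows $S_j(l) = 0$ precisely when $p^j \mid l$, and $S_j(l) > 0$ otherwise. Combined with the non-negativity of the weights, this proves $\varepsilon(l) \geq 0$.

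For the equality statement, minimality of $f$ forces $r_f(G) - r_{f+1}(G) \geq 1$, so the $j = f$ term of the Abel sum carries strictly positive weight. Hence $\varepsilon(l) = 0$ forces $S_f(l) = 0$, i.e.\ $p^f \mid l$. Conversely, if $p^f \mid l$ then $p^j \mid l$ for every $j \leq f$, whence every $S_j(l)$ in the sum vanishes. The degenerate case $f = 0$ (that is, $G$ cyclic) corresponds to an empty sum, giving $\varepsilon(l) = 0$ unconditionally, in agreement with $p^0 = 1 \mid l$ for all $l$. The only real obstacle in the argument is spotting the two telescoping identities for $w_i(l)$ that drive both the initial simplification and the evaluation of $S_j(l)$; once those are in hand, Abel summation and the elementary case analysis on $p^j \mid l$ finish the proof mechanically.
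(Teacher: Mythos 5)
Your proof is correct and takes essentially the same route as the paper: multiply through by $lp^e$, Abel-sum against $T_i(l)$ (the paper's $v_i(l) = lw_i(p^e)-p^ew_i(l)$), and reduce to showing the partial sums $S_j(l)$ are nonnegative and vanish exactly when $p^j\mid l$ — the paper obtains the same closed form for $S_j(l)$ via the $p$-adic digits of $l-1$, which is equivalent to your use of $\lfloor(l-1)/p^j\rfloor$. One small slip worth flagging: in the boundary case $f=e$ the convention $r_{e+1}(G)=0$ makes your stated $j=f$ weight $r_f(G)-r_{f+1}(G)=r_e(G)$, whereas the Abel identity for $\sum_{i=1}^f(r_i(G)-1)T_i(l)$ actually produces $r_f(G)-1$ there (the paper writes this coefficient explicitly as $r_f(G)-1$); since both quantities are strictly positive by minimality of $f$, the nonnegativity and the equality characterisation are unaffected.
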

\begin{proof} Recall $w_i(l) = \lfloor (l-1)/p^{i-1} \rfloor - \lfloor (l-1)/p^i \rfloor$. Since $w_i(p^e) = p^{e-i}(p-1)$ holds for $1\leq i \leq e$, we have $\varepsilon(p^e) = 0$. The inequation $\varepsilon(l)\geq 0$ is hence equivalent with the inequation
$$
 w(l) = l - p^e + \sum\limits_{i=1}^e r_i(G) \big( lw_i(p^e) - p^ew_i(l) \big) \geq 0
$$
for the nominator of $\varepsilon(l)=w(l)/(p^el)$, and we have $\varepsilon(l)=0$ if and only if $w(l)=0$ holds. We further set $v_i(l) = lw_i(p^e) - p^ew_i(l)$. Then $w(l)$ has the form
$$  w(l) =  \left(  l - p^e + \sum\limits_{i=1}^e v_i(l)  \right)
  + \left( (r_e(G)-1) \sum\limits_{i=1}^e v_i(l) \right)
 +  \sum\limits_{j=1}^{e-1} \left( (r_j(G)-r_{j+1}(G)) \sum\limits_{i=1}^j v_i(l) \right).
$$
Since $p^fG$ is cyclic, we have $r_i(G) = 1$ for $f+1 \leq i \leq e$. Thus we obtain
\begin{equation}\label{LHSClaim1}  w(l) =  \left(  l - p^e + \sum\limits_{i=1}^e v_i(l)  \right)
  + \left( (r_f(G)-1) \sum\limits_{i=1}^f v_i(l) \right)
 +  \sum\limits_{j=1}^{f-1} \left( (r_j(G)-r_{j+1}(G)) \sum\limits_{i=1}^j v_i(l) \right).
\end{equation}
Now we shall prove that the entries of the big parentheses are nonnegative respectively. Let $l-1=l_0+\ldots+l_{e-1}p^{e-1}$ be in $p$-adic representation, i.e. $0\leq l_k\leq p-1$ for $k = 0,\ldots,e-1$. By definition of $w_i(l)$, we have
$$ w_i(l) = \left(\sum\limits_{k=i}^{e-1} l_kp^{k-i}(p-1)\right) + l_{i-1},  $$
$$ lw_i(p^e) =  (p-1)p^{e-i} + \left(\sum\limits_{k=0}^{e-1} l_kp^k(p-1)p^{e-i}\right), $$
$$ p^ew_i(l) =  \left(\sum\limits_{k=i}^{e-1} l_kp^{k-i}(p-1)p^{e}\right) + l_{i-1}p^{e}, $$
and
$$ v_i(l) = (p-1) p^{e-i} + \left( \sum\limits_{k=0}^{i-1} l_kp^k (p-1)p^{e-i} \right)  - l_{i-1}p^{i-1}p^{e-i+1} $$
for $i=1,\ldots,e$. By summing from $i=1$ up to $j$ with $1\leq j\leq e$, we obtain
\begin{align*}\label{SumSi}
\sum\limits_{i=1}^j v_i(l)  =  & \Bigg(\sum\limits_{i=1}^j (p-1)p^{e-i}\Bigg) + \Bigg(\sum\limits_{k=0}^{j-1} l_kp^k \sum\limits_{i=k+1}^j (p-1)p^{e-i} \Bigg) - \Bigg(\sum\limits_{k=0}^{j-1} l_k p^k p^{e-k} \Bigg) \\
 = & \Bigg( p^e - p^{e-j} \Bigg) +  \sum\limits_{k=0}^{j-1} l_kp^k \Big( \big( p^{e-k} - p^{e-j}  \big) - p^{e-k} \Big) \\
 = & \ p^e - p^{e-j} \left( 1 + \sum\limits_{k=0}^{j-1} l_kp^k \right).
\end{align*}
Since the last parenthesis is at most $p^j$, this sum is nonnegative. Thus the two latter parentheses of (\ref{LHSClaim1}) are nonnegative, too. Note $r_i(G) \geq r_{i+1}(G)$ hereby. The first parenthesis of (\ref{LHSClaim1}) even vanishes due to
$$ l - p^e + \sum\limits_{i=1}^e v_i(l) = l - \left( 1 + \sum\limits_{k=0}^{e-1} l_kp^k \right) = 0.  $$
We hence have proven $w(l)\geq 0$, as desired. Further $w(l)$ is zero if and only if the latter two big parentheses of (\ref{LHSClaim1}) are also zero respectively. Firstly, we consider the case $f=0$, i.e. $G$ is cyclic. Then we have $r_i(G)=1$ for $1\leq i \leq e$, whence $w(l)=0$ follows for all integers $l$ in question, as proposed. In the case $f\neq 0$, we have $r_f(G) \neq 1$. Then the second parenthesis of (\ref{LHSClaim1}) is zero if and only if we have $l_0 = \ldots = l_{f-1} = p-1$, which is equivalent to $p^f \mid l$. In the same way, the third parenthesis is zero if and only if we have $r_j(G) = r_{j+1}(G)$ or $p^j \mid l$ for $j=1,\ldots,f$. Altogether, we have $w(l) = 0$ if and only if $p^f \mid l$. 
\end{proof}

\begin{prop}\label{PropertiesLambdaG} The function 
$$\Lambda_{G}(s) = \prod\limits_{l=2}^{p^e} \zeta_{F} \left( ls - \sum\limits_{i=1}^e r_i(G) w_i(l) \right) $$ 
has the following properties.
\begin{enumerate}
 \item[\textup{(a)}] Let $G$ be cyclic. Then $\alpha_p(G) = 1$ holds, and $\Lambda_{G}(s)$ is holomorphic for $\Re(s) > 1 - 1/p^e$ except for the set of poles 
$$ \left\{ 1 + \frac{2 \pi i}{\log(q)} \frac{j}{l} : 2 \leq l \leq p^e,\, 0 \leq j < l \right\} + \frac{2 \pi i}{\log(q)}\, \ZZ. $$
The periodic pole corresponding to $s=1$ is of maximal order $\beta(F,G) = p^e-1$. In case of $p^e \neq 2$, every other pole has less order.
 \item[\textup{(b)}] Let $G$ be noncyclic. Then $\Lambda_{G}(s)$ is holomorphic for $\Re(s) > \alpha_p(G) - \varepsilon$ with some $\varepsilon > 0$ except for the set of poles
$$ \left\{ \alpha_p(G) + \frac{2 \pi i}{\log(q)} \frac{j}{p^fl} : 1\leq l \leq p^{e-f},\, 0 \leq j < p^fl \right\} + \frac{2 \pi i}{\log(q)}\, \ZZ. $$
The periodic pole corresponding to $s=\alpha_p(G)$ is of maximal order $\beta(F,G) = p^{e-f}$. In case of $p^{e-f} \neq 1$, every other pole has less order. 
\end{enumerate}
\end{prop}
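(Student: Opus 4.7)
The approach is to exploit that $\Lambda_G(s)$ is a finite product of shifted copies of $\zeta_F(s)$, which is meromorphic on $\CComplex$ with only simple poles located at $\{0,1\} + (2\pi i/\log q)\,\ZZ$. Setting $c_l := \sum_{i=1}^e r_i(G)\,w_i(l)$, each factor $\zeta_F(ls - c_l)$ is meromorphic with simple poles precisely on the two vertical lines $\Re(s) = c_l/l$ and $\Re(s) = (1+c_l)/l$. By Proposition~\ref{EpsilonL}, the latter equals $\alpha_p(G) - \varepsilon(l)$ with $\varepsilon(l) \geq 0$ and equality iff $p^f \mid l$, while the former equals $\alpha_p(G) - \varepsilon(l) - 1/l$, hence lies weakly to the left of $\alpha_p(G) - 1/p^e$. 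This locates every singularity of $\Lambda_G$ and drives both parts.

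For part (a), cyclicity forces $r_i(G) = 1$ for $1 \leq i \leq e$, so $c_l$ telescopes to $l-1$ and the right-hand line of every factor sits on $\Re(s) = \alpha_p(G) = 1$, while the left-hand lines $\Re(s) = 1 - 1/l$ have supremum $1 - 1/p^e$; this yields the claimed holomorphy region and the parametrisation of the pole set. The order of $\Lambda_G$ at $s = 1 + 2\pi i t/\log q$ with $t \in [0,1)$ counts the factors with $lt \in \ZZ$: for $t = 0$ this is $p^e - 1 = \beta(F,G)$, while for $t = a/b$ in lowest terms with $b \geq 2$ only multiples of $b$ contribute, giving at most $\lfloor p^e/b \rfloor \leq p^e/2$, which is strictly less than $p^e - 1$ as soon as $p^e \neq 2$.

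For part (b), I set $\varepsilon := \min(\{\varepsilon(l) : p^f \nmid l,\ 2 \leq l \leq p^e\} \cup \{1/p^e\})$, which is positive by Proposition~\ref{EpsilonL} and the finiteness of the factor set. In the strip $\alpha_p(G) - \varepsilon < \Re(s) \leq \alpha_p(G)$ only the right-hand poles of factors with $p^f \mid l$, that is $l = p^f l'$ for $1 \leq l' \leq p^{e-f}$, survive; reducing the union of orbits $\alpha_p(G) + (2\pi i/(p^f l' \log q))\,\ZZ$ modulo $2\pi i/\log q$ recovers the parametrisation stated. The order at $s = \alpha_p(G) + 2\pi i t/\log q$ then equals the number of admissible $l'$ with $p^f l' t \in \ZZ$, which achieves the upper bound $p^{e-f} = \beta(F,G)$ at $t = 0$.

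The principal obstacle lies in the refined order comparison at the remaining poles. Writing $t = a/b$ in lowest terms and $d = \gcd(b, p^f)$, the order reduces to $\lfloor p^{e-f} d/b \rfloor$, and one must trace through a case analysis on whether $b$ is a pure $p$-power to see when this count drops strictly below $p^{e-f}$. This is the hard technical step; the cyclic case (a) is cleaner because a single denominator governs each factor, leaving only the standard divisor bound $\lfloor p^e/b \rfloor \leq p^e/2$ to invoke.
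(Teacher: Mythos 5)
Your argument mirrors the paper's proof: both locate the candidate poles on the lines $\Re(s)=c_l/l$ and $\Re(s)=(1+c_l)/l$ with $c_l=\sum_i r_i(G)w_i(l)$, push all but the $p^f\mid l$ factors strictly left of the abscissa via Proposition~\ref{EpsilonL} (with the uniform margin $1/p^e$ for the left family and $\varepsilon=\min\{\varepsilon(l):p^f\nmid l\}$ for the right), and then count contributing factors at each candidate pole; the paper packages the final order comparison by noting that $2\pi i/(p^f\log q)$ is the meet of the individual periods $2\pi i/(p^f l'\log q)$, whereas you phrase it through reduced fractions $t=a/b$, and these are equivalent. The step you flag as the main obstacle in~(b) is in fact immediate and needs no case split: with $d=\gcd(b,p^f)$, one has $\lfloor p^{e-f}d/b\rfloor<p^{e-f}$ precisely when $d<b$, that is, when $b\nmid p^f$, so the maximal-order poles on $\Re(s)=\alpha_p(G)$ form exactly the lattice $\alpha_p(G)+\big(2\pi i/(p^f\log q)\big)\ZZ$ --- the same conclusion the paper reaches via the meet-of-periods remark, and also the sense in which ``every other pole has less order'' in part~(b) must be read (consistent with Theorem~\ref{main1}, which only asserts that other poles on the abscissa do not exceed order $\beta(F,G)$).
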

\begin{proof} Since the poles of $\zeta_F(s)$ are given by the values of $s$ with $q^s=1$ and $q^s=q$ respectively, the poles of $\Lambda_{G}(s)$ are given by the values of $s$ with $q^{ls} = \prod\nolimits_{i=1}^e q^{r_i(G)w_i(l)}$ and $q^{ls} = q\prod\nolimits_{i=1}^e q^{r_i(G)w_i(l)}$ for $l=2,\ldots,p^e$ respectively. Each of these poles has period $2\pi i/(\log(q)l)$. By Proposition \ref{FractionComparision}, the former poles are out of the region of interest due to 
$$ \alpha_p(G) - \frac{\sum\nolimits_{i=1}^e r_i(G)w_i(l)}{l} = \varepsilon(l) + \frac{1}{l} \geq \frac{1}{p^e}. $$ 
Furthermore, the latter poles meet $\alpha_p(G)$ in case of $p^f \mid l$ only. Otherwise they have positive distance to the convergence abscissa. This shows the existence of $\varepsilon = \min \big\{ \varepsilon(l) : 2\leq l \leq p^e, p^f \nmid l \big\}$ for assertion (b). The pole order of $a_p(G)$ is hence given by $\beta(F,G) = \left|\big\{ 2 \leq l \leq p^e : p^f \mid l \big\}\right|$. Since $2\pi i/(p^f\log(q))$ is the meet of all periods, only the poles $s=a_p(G) + 2\pi i/(p^f\log(q))$ have maximal order, if its order is at least $2$. 
\end{proof}

\begin{prop}\label{EpsilonMultL} Let $k\geq 2$. For any $k$ integers $1 \leq l_1, \ldots, l_k \leq p^e -1$, let
$$ \varepsilon(l_1,\ldots,l_k) = \alpha_p(G) - \frac{1 + \sum\nolimits_{i=1}^e r_i(G)\big(w_i(l_1) + \ldots + w_i(l_k)\big)}{l_1 + \ldots + l_k}. $$
Then we have $\varepsilon(l_1,\ldots,l_k) > 1/(2p^e)$.  
\end{prop}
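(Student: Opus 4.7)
The plan is to reduce to the single-variable estimate of Proposition \ref{EpsilonL} by summing, rather than attempting a direct $p$-adic computation along the lines of Proposition \ref{FractionComparision}. Rearranging the non-negativity $\varepsilon(l) \geq 0$ for $2 \leq l \leq p^e$, one obtains
$$\sum_{i=1}^e r_i(G)\, w_i(l) \leq l\, \alpha_p(G) - 1.$$
The first step is to verify that this inequality also holds for $l = 1$: the left-hand side vanishes since $w_i(1) = 0$, and the right-hand side equals $\alpha_p(G) - 1 \geq 0$ because the explicit formula immediately yields $\alpha_p(G) \geq 1$ for every finite abelian $p$-group of exponent $p^e$ (with equality precisely in the cyclic case, where $r_i(G) = 1$ for $1 \leq i \leq e$).

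Next, I would simply sum the inequality above over $l = l_1, \ldots, l_k$. Setting $L = l_1 + \ldots + l_k$, this gives
$$\sum_{i=1}^e r_i(G) \bigl(w_i(l_1) + \ldots + w_i(l_k)\bigr) \leq L\, \alpha_p(G) - k,$$
and after dividing by $L$ and rearranging,
$$\varepsilon(l_1, \ldots, l_k) \geq \frac{k-1}{L}.$$

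Finally, since every $l_j$ is at most $p^e - 1$, we have $L \leq k(p^e - 1)$, and hence
$$\varepsilon(l_1, \ldots, l_k) \geq \frac{k-1}{k(p^e-1)} = \frac{1}{p^e - 1}\Bigl(1 - \frac{1}{k}\Bigr) \geq \frac{1}{2(p^e - 1)} > \frac{1}{2p^e},$$
where the second-to-last inequality uses the hypothesis $k \geq 2$ and the final strict inequality is immediate. The hypothesis $k \geq 2$ enters essentially: it is precisely the extra factor $(k-1)/k \geq 1/2$ from summing that upgrades the non-strict bound of Proposition \ref{EpsilonL} to a strictly positive margin. There is no substantive obstacle here, as all analytic work has been carried out in Proposition \ref{EpsilonL}; what remains is only the bookkeeping above.
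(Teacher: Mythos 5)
Your argument is correct and is essentially the same as the paper's: both reduce to the pointwise bound from Proposition~\ref{EpsilonL} (yours by rearranging and summing, the paper's via a mediant-of-fractions observation), both arrive at $\varepsilon(l_1,\ldots,l_k) \geq (k-1)/(l_1+\cdots+l_k)$, and both finish with $l_1+\cdots+l_k \leq k(p^e-1)$ and $k\geq 2$. Your explicit treatment of the case $l_j = 1$ (using $w_i(1)=0$ and $\alpha_p(G)\geq 1$) is a detail the paper leaves implicit, so your write-up is if anything slightly more careful.
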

\begin{proof} We first consider the following general situation. Let $q_i = a_i/b_i$ be positive fractions for $i=1,\ldots,k$ with $q_1 < q_2 < \ldots < q_k$, and let $q = (a_1 + \ldots + a_k)/(b_1 + \ldots + b_k)$. Then $q_1 < q < q_k$ holds. To see this, let $f_i(s) = a_i - b_is$ and $f(s) = f_1(s) + \ldots + f_k(s)$ be functions on real numbers. These functions are linearly decreasing and possess the zero $s = q_i$ and $s = q$ respectively. We have $f_1(s) \geq 0$ and $f_i(s) > 0$ for $i \neq 1$ and $s \leq q_1$. This implies $f(s) > 0$ for $s \leq q_1$. In the same way, we obtain $f(s) < 0$ for $s \geq q_k$. Thus the zero $s=q$ of $f(s)$ is located properly between $q_1$ and $q_k$. Together with this thought, Proposition \ref{EpsilonL} provides the chain of inequalities
\begin{align*}
\alpha_{p}(G) = \frac{1 + \sum\nolimits_{i=1}^e r_i(G)w_i(p^e)}{p^e} & \geq \max \left\{ \frac{1 + \sum\nolimits_{i=1}^e r_i(G) w_i(l_j)}{l_j} : 1\leq j \leq k \right\} \\
& \geq \frac{k + \sum\nolimits_{i=1}^e r_i(G) \big( w_i(l_1) + \ldots + w_i(l_k) \big)}{l_1 + \ldots + l_k}  \\
& = \alpha_{p}(G) - \varepsilon(l_1,\ldots,l_k) + \frac{k-1}{l_1 + \ldots + l_k}.
\end{align*}
Thus the proposed inequality follows from
$$ \varepsilon(l_1,\ldots,l_k) \geq \frac{k-1}{l_1 + \ldots + l_k} \geq \frac{k-1}{k(p^e-1)} > \frac{1}{2p^e}.  $$
\end{proof}

\begin{prop}\label{PsiG} The Euler product 
$$ \Psi_{G}(s) = \prod\limits_{\mf{p}} \Bigg( 1 + \sum\limits_{l=1}^{p^e-1} u_{G}\big(\mf{p}^l\big) \N{p}^{-ls} \Bigg) \prod\limits_{l=1}^{p^e-1} \Big( 1 - u_{G}\big(\mf{p}^l\big) \N{p}^{-ls} \Big) $$ 
is holomorphic for $\Re(s) > \alpha_p(G) - \varepsilon$ for some $\varepsilon > 1/(2p^e)$.
\end{prop}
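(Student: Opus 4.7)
The plan is to observe that the two factors in the local Euler piece $\Psi_{G,\mf{p}}(s)$ are designed precisely so that the linear (single-monomial) terms in $\N{p}^{-s}$ cancel. Expanding
$$ \prod_{l=1}^{p^e-1}\big(1 - u_G(\mf{p}^l)\N{p}^{-ls}\big) = 1 - \sum_{l=1}^{p^e-1} u_G(\mf{p}^l)\N{p}^{-ls} + R_\mf{p}(s), $$
where $R_\mf{p}(s)$ collects products of two or more factors $u_G(\mf{p}^{l_i})\N{p}^{-l_is}$ with distinct $l_i$'s, multiplying with $\bigl(1 + \sum_{l}u_G(\mf{p}^l)\N{p}^{-ls}\bigr)$ cancels the linear sum exactly. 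Hence $\Psi_{G,\mf{p}}(s) - 1$ is a finite integer linear combination of monomials of the form $\N{p}^{A(\mathbf{l}) - B(\mathbf{l})s}$ indexed by tuples $\mathbf{l}=(l_1,\ldots,l_k)$ with $k\geq 2$ and $1\leq l_i\leq p^e-1$, where $B(\mathbf{l}) = l_1 + \ldots + l_k$ and $A(\mathbf{l}) = \sum_{i=1}^e r_i(G)(w_i(l_1)+\ldots+w_i(l_k))$, and the combinatorial coefficients are bounded independently of~$\mf{p}$.

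Next I would apply Proposition~\ref{EpsilonMultL}: for each such tuple,
$$ \frac{A(\mathbf{l})+1}{B(\mathbf{l})} = \alpha_p(G) - \varepsilon(l_1,\ldots,l_k), $$
and its proof gives the uniform lower bound $\varepsilon(l_1,\ldots,l_k) \geq (k-1)/(k(p^e-1)) \geq 1/(2(p^e-1))$, attained at $k=2$. Setting $\varepsilon := 1/(2(p^e-1)) > 1/(2p^e)$, for every $\sigma > \alpha_p(G) - \varepsilon$ we get $B(\mathbf{l})\sigma - A(\mathbf{l}) > 1$, so that $\sum_\mf{p} \N{p}^{A(\mathbf{l}) - B(\mathbf{l})\sigma}$ converges by the usual prime counting estimate $\sum_\mf{p} \N{p}^{-\tau} < \infty$ for $\tau > 1$. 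Summing the finitely many contributions over tuples $\mathbf{l}$ shows that $\sum_\mf{p} |\Psi_{G,\mf{p}}(s) - 1|$ converges absolutely and locally uniformly on $\Re(s) > \alpha_p(G) - \varepsilon$; therefore $\Psi_G(s) = \prod_\mf{p} \Psi_{G,\mf{p}}(s)$ is absolutely convergent and holomorphic there.

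The conceptual obstacle is the cancellation of the linear terms, which is really what forces the statement to hold: without it one would only get absolute convergence for $\Re(s) > \alpha_p(G) - \varepsilon(l)$ with a single index $l$, and by Proposition~\ref{EpsilonL} this $\varepsilon(l)$ vanishes whenever $p^f \mid l$, so no improvement past $\alpha_p(G)$ would be available. The quantitative bound $\varepsilon > 1/(2p^e)$ is then an immediate numerical consequence of the $k=2$ case in Proposition~\ref{EpsilonMultL}; everything else is bookkeeping on the finite expansion of $\Psi_{G,\mf{p}}(s)$.
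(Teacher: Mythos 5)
Your proof is correct and follows essentially the same route as the paper: expand the local Euler factor $\Psi_{G,\mf{p}}(s)$ into monomials $U_{l_1}\cdots U_{l_k}$, observe that the degree-one terms cancel (your explicit $(1+S)(1-S+R)=1+R-S^2+SR$ identity is the same cancellation the paper records by stating $a(l)=0$), and then invoke Proposition~\ref{EpsilonMultL} to show that every surviving monomial contributes an $\N{p}$-exponent strictly less than $-1$ for $\Re(s)>\alpha_p(G)-\varepsilon$ with $\varepsilon>1/(2p^e)$. Your choice of $\varepsilon = 1/(2(p^e-1))$ extracted from the $k=2$ case of the proof of Proposition~\ref{EpsilonMultL} is slightly sharper than the stated $>1/(2p^e)$, but this is a cosmetic refinement rather than a different method.
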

\begin{proof} The Euler product $\Psi_{G}(s) = \prod\nolimits_{\mf{p}} \Psi_{G,\mf{p}}(s)$ converges absolutely if and only if all $\N{p}$-exponents of $\Psi_{G,\mf{p}}(\Re(s))$ are less $-1$. Let $s$ be a real number. If we recall the definition $U_l = u_G(\mf{p}^l)\N{p}^{-ls}$ from the proof of Proposition \ref{EulerFactors}, we obtain
\begin{equation}\label{Psih2}
 \Psi_{G,\mf{p}} = 1 + \sum\limits_{k=1}^{p^e} \sum\limits_{1 \leq l_1,\ldots,l_k \leq p^e -1} a(l_1,\ldots,l_k) U_{l_1} \cdots U_{l_k},
\end{equation}
where $a(l_1,\ldots,l_k)$ are suitable integers independent of $\mf{p}$. The sum corresponding to $k=1$ is zero, since the single-arguments $U_l$ occur twice with different sign respectively, i.e. we have $a(l)=0$ for $l=1,\ldots,p^e-1$. The $\N{p}$-exponents of multi-arguments $U_{l_1} \cdots U_{l_k}$ are of the form 
$$ \left( \sum\limits_{i=1}^e r_i(G) \big(w_i(l_1) + \ldots + w_i(l_k)\big) \right) - \big(l_1+\ldots + l_k\big) s. $$
These are less than $-1$ for 
$$ s > \frac{1 + \sum\nolimits_{i=1}^e r_i(G) \big(w_i(l_1) + \ldots + w_i(l_k)\big)}{l_1 + \ldots l_k}.    $$
Now Proposition \ref{PsiG} follows from Proposition \ref{EpsilonMultL}. 
\end{proof}

\begin{lemma}\label{MeromorphicContinuation} The Dirichlet series $\Phi_{G}(s)$ has the meromorphic continuation $\Phi_{G}(s) = \Psi_{G}(s) \Lambda_{G}(s)$ for $\Re(s) > \alpha_p(G) - \varepsilon$ and some $\varepsilon > 1/(2p^e)$.
\end{lemma}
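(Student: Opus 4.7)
The plan is to combine the three main preparatory results: the local Euler factorization from Proposition \ref{EulerFactors}, the meromorphic continuation of $\Lambda_G(s)$ from Proposition \ref{PropertiesLambdaG}, and the holomorphy of $\Psi_G(s)$ beyond the convergence abscissa from Proposition \ref{PsiG}. The idea is that in the common domain of absolute convergence one can rearrange the Euler product factor-by-factor, and the resulting identity then propagates by analytic continuation to the larger half-plane.

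First I would observe that, by definition (\ref{PhiH}) with $H = G$, the Dirichlet series $\Phi_G(s)$ is an absolutely convergent Euler product $\Phi_G(s) = \prod_{\mf{p}} \Phi_{G,\mf{p}}(s)$ for $\Re(s)$ sufficiently large (concretely, in the half-plane of absolute convergence of $\Phi(F,G;s)$, which is $\Re(s) > \alpha_p(G)$). In that half-plane Proposition \ref{EulerFactors} lets me replace each Euler factor by the product $\Phi_{G,\mf{p}}(s) = \Lambda_{G,\mf{p}}(s) \Psi_{G,\mf{p}}(s)$. Since all factors and both partial products converge absolutely there, I may regroup to obtain
$$ \Phi_G(s) = \prod_{\mf{p}} \Lambda_{G,\mf{p}}(s) \cdot \prod_{\mf{p}} \Psi_{G,\mf{p}}(s) = \Lambda_G(s) \, \Psi_G(s), $$
where the first product is the zeta-function product $\Lambda_G(s)$ (which converges absolutely whenever each of the factors $\zeta_F(ls - \sum_i r_i(G) w_i(l))$ does, i.e.~for $\Re(s)$ large enough by Proposition \ref{EpsilonL}), and the second product is $\Psi_G(s)$.

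Next I would extend this identity to the larger strip. By Proposition \ref{PropertiesLambdaG}, the function $\Lambda_G(s)$ is a meromorphic function on all of $\CComplex$, holomorphic in the half-plane $\Re(s) > \alpha_p(G) - \varepsilon_1$ away from the listed pole set, for some $\varepsilon_1 > 0$. By Proposition \ref{PsiG}, the Euler product $\Psi_G(s)$ converges absolutely and defines a holomorphic function on $\Re(s) > \alpha_p(G) - \varepsilon_2$ for some $\varepsilon_2 > 1/(2p^e)$. Setting $\varepsilon = \min\{\varepsilon_1,\varepsilon_2\}$ (which is still $> 1/(2p^e)$ after possibly shrinking $\varepsilon_1$, noting that Proposition \ref{PropertiesLambdaG} permits any strictly positive $\varepsilon_1$ up to the next pole), the product $\Lambda_G(s)\Psi_G(s)$ is a well-defined meromorphic function on $\Re(s) > \alpha_p(G) - \varepsilon$.

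Finally I would invoke the identity theorem for meromorphic functions: the equality $\Phi_G(s) = \Lambda_G(s)\Psi_G(s)$ has been established on the non-empty open half-plane of absolute convergence, and the right-hand side is meromorphic on $\Re(s) > \alpha_p(G) - \varepsilon$, so this equation furnishes the meromorphic continuation of $\Phi_G(s)$ to that half-plane. The only delicate point, and the main thing to be careful about, is the rearrangement of the Euler product; this needs the absolute (and not merely conditional) convergence of both $\prod_{\mf{p}} \Lambda_{G,\mf{p}}(s)$ and $\prod_{\mf{p}} \Psi_{G,\mf{p}}(s)$ simultaneously at the initial anchoring point, which is guaranteed by choosing $\Re(s)$ larger than both $\alpha_p(G)$ and the abscissa of absolute convergence of $\Lambda_G(s)$—a half-plane which is certainly non-empty. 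Everything else is a direct application of the preceding propositions.
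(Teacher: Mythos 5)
Your proof is correct and takes essentially the same approach as the paper, which dispatches the lemma in one line by citing Propositions \ref{EulerFactors}, \ref{PropertiesLambdaG}, and \ref{PsiG}. One small simplification you could make: $\Lambda_G(s)$ is a finite product of zeta functions of $F$, hence meromorphic on all of $\CComplex$, so the bound $\varepsilon > 1/(2p^e)$ in the lemma comes entirely from the domain of holomorphy of $\Psi_G(s)$ in Proposition \ref{PsiG} and there is no need to take a minimum with a separate $\varepsilon_1$ for $\Lambda_G$.
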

\begin{proof} This is just a corollary of Proposition \ref{EulerFactors}, Proposition \ref{PropertiesLambdaG}, and Proposition \ref{PsiG}.
\end{proof}

\begin{prop}\label{AlphaGH} For any proper subgroup $pG \unlhd H \lhd G$, we have $\alpha_p(G) - \alpha_p(H) > 1/(2p^e)$ unless $r_e(G) = 1$, $r_e(H) = 0$, and $r_i(G) = r_i(H)$ for $i=1,\ldots,e-1$.
\end{prop}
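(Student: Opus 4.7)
The plan is to reduce the problem to a direct calculation with the closed-form formula for $\alpha_p$. First I would rewrite it in the compact form
$$\alpha_p(A) = p^{-e_A} + (p-1)\sum_{i\geq 1} p^{-i}\, r_i(A),$$
where $p^{e_A}$ denotes the exponent of the finite abelian $p$-group $A$ (the sum is effectively finite because $r_i(A)=0$ for $i>e_A$). For any subgroup $H\leq G$ one has $r_i(H)\leq r_i(G)$ for every $i$, by comparing the $\ffield{p}$-vector spaces $A[p^i]/A[p^{i-1}]$ for $A = H$ and $A = G$. The essential preliminary observation, which uses the hypothesis $pG\unlhd H$ in a decisive way, is that $H$ must contain an element of order $p^{e-1}$: take $p$ times a generator of a cyclic summand of $G$ of order $p^e$. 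Hence $e_H\in\{e-1,e\}$, and I would split the argument along these two possibilities.

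If $e_H=e$, the terms $p^{-e_A}$ in $\alpha_p(G)$ and $\alpha_p(H)$ cancel and
$$\alpha_p(G)-\alpha_p(H) = (p-1)\sum_{i=1}^{e} p^{-i}\bigl(r_i(G)-r_i(H)\bigr).$$
Every summand is nonnegative, and they total $\log_p[G:H]\geq 1$. Thus the smallest possible single contribution already gives a lower bound of $(p-1)p^{-e}$, which strictly exceeds $1/(2p^e)$ for every prime $p\geq 2$. No exceptional configuration arises in this case.

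If instead $e_H=e-1$, a short manipulation using the identity $p^{-(e-1)}-p^{-e}=(p-1)p^{-e}$ to absorb the difference of the $p^{-e_A}$ terms yields
$$\alpha_p(G)-\alpha_p(H) = (p-1)\sum_{i=1}^{e-1} p^{-i}\bigl(r_i(G)-r_i(H)\bigr) + (p-1)\,p^{-e}\bigl(r_e(G)-1\bigr).$$
Both parts are manifestly nonnegative, and they vanish simultaneously exactly when $r_i(G)=r_i(H)$ for $i=1,\ldots,e-1$ together with $r_e(G)=1$. Combined with the running case hypothesis $r_e(H)=0$, this is precisely the exceptional configuration in the statement. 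Whenever the expression does not vanish, at least one term is strictly positive, contributing at least $(p-1)p^{-(e-1)}$ from the first sum or $(p-1)p^{-e}$ from the second; both of these strictly exceed $1/(2p^e)$ for $p\geq 2$.

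The step I expect to require most care is the preliminary bound $e_H\geq e-1$, since it is what collapses an a priori wider range of possibilities for $H$ into just the two clean cases above and forces the exceptional configuration to take the form stated in the proposition. The rest is essentially bookkeeping with the geometric series $\sum_{i\geq 0} p^{-i}$ and the monotonicity $r_i(H)\leq r_i(G)$.
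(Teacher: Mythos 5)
Your proof is correct and follows essentially the same route as the paper: split on whether $H$ has the full exponent $p^e$ (equivalently $r_e(H)\neq 0$) or exponent $p^{e-1}$ (equivalently $r_e(H)=0$), write $\alpha_p(G)-\alpha_p(H)$ as a nonnegative combination of the rank differences $r_i(G)-r_i(H)$ with coefficients $(p-1)p^{-i}$, and observe that the minimum positive value is at least $(p-1)p^{-e}\geq p^{-e}>1/(2p^e)$. The paper phrases the coefficients via $w_i(p^e)=p^{e-i}(p-1)$ and the identity $pw_i(p^{e-1})=w_i(p^e)$ rather than your compact form $\alpha_p(A)=p^{-e_A}+(p-1)\sum_i p^{-i}r_i(A)$, but the algebra is the same; your explicit justification that $pG\leq H$ forces $e_H\in\{e-1,e\}$ is a clean way to state what the paper uses implicitly.
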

\begin{proof} Let $\varepsilon(H) = \alpha_p(G) - \alpha_p(H)$. Note that $r_i(G) \geq r_i(H) \geq r_{i+1}(G)$ holds for $i\geq 1$, where the lower bound is a consequence of $pG \unlhd H$. We first assume $r_e(H) \neq 0$, such that $G$ and $H$ have the same exponent. Then we obtain
$$ \varepsilon(H) = \frac{\sum\nolimits_{i=1}^e (r_i(G) - r_i(H))w_i(p^e)}{p^e}, $$
by Proposition \ref{EpsilonL}. Thus $H \neq G$ implies $\varepsilon(H) \geq 1/p^e$ in this case. We now assume $r_e(H) = 0$. As $H$ contains $pG$, it hence has exponent $p^{e-1}$ and abscissa
$$ \alpha_p(H) = \frac{1 + \sum\nolimits_{i=1}^{e-1} r_i(H) w_i(p^{e-1})}{p^{e-1}} = \frac{p + \sum\nolimits_{i=1}^{e-1} r_i(H) w_i(p^e)}{p^e} = \frac{1 + w_e(p^e) + \sum\nolimits_{i=1}^{e-1} r_i(H) w_i(p^e)}{p^e}. $$
Note $w_e(p^e) = p-1$ and $pw_i(p^{e-1}) = w_i(p^e)$ hold for $i=1,\ldots,e-1$ hereby. This yields
$$ \varepsilon(H) = \frac{(r_e(G)-1)w_e(p^e) + \sum\nolimits_{i=1}^{e-1} (r_i(G) - r_i(H)) w_i(p^e)}{p^e}. $$
Hence $\varepsilon(H) = 0$ holds if and only if $r_e(G) = 1$ and $r_i(G) = r_i(H)$ hold for $i=1,\ldots,e-1$. Otherwise we have $\varepsilon(H) \geq 1/p^e$.
\end{proof}

\begin{proof}[\textbf{Proof of Theorem \ref{main1}.}] By Proposition \ref{PropertiesLambdaG}, we only have to show that $\Psi(s) = \Phi(F,G;s) \Lambda_{G}(s)^{-1}$ is convergent for $\Re(s) > a$ with $a = \alpha_p(G) - 1/(2p^e)$. We have the formal equality
$$ \Psi(s) = e_G \Psi_{G}(s) + \sum\limits_{G \neq H \unlhd G} e_H \Phi_{H}(s) \Lambda_{G}(s)^{-1} + \Upsilon_G(s) \Lambda_{G}(s)^{-1} $$
in the region of convergence, and we shall see that the right-hand side has convergence abscissa $a$. The zeros of $\Lambda_{G}(s)$ are located on the imaginary axes $\Re(s) = \big(1 + 2 \sum\nolimits_{i=1}^e r_i(G)w_i(l)\big)/2l$ for $l = 2,\ldots, p^e$, by the theorem of Hasse-Weil. These fractions are maximal for $l$ being maximal, whence $\Lambda_{G}(s)^{-1}$ has convergence abscissa $\big(1+2 \sum\nolimits_{i=1}^e r_i(G)w_i(p^e)\big)/2p^e = a$. Thus the same holds for $\Upsilon_G(s)\Lambda_{G}(s)^{-1}$ by Proposition~\ref{Upsilon}. Further is $\Psi_{G}(s)$ convergent for $\Re(s) > a$ by Proposition~\ref{PsiG}. Hence we are left to show that the sum $\sum\nolimits_{G \neq H \unlhd G} e_H \Phi_{H}(s) \Lambda_{G}(s)^{-1}$ is convergent for $\Re(s)>a$ as well. Firstly, the sum reduces to 
$$ \sum\limits_{G \neq H \unlhd G} e_H \Phi_{H}(s) \Lambda_{G}(s)^{-1} = \sum\limits_{pG \unlhd H \neq G} e_H \Phi_{H}(s) \Lambda_{G}(s)^{-1}, $$
since $e_H$ is zero for $H$ not containing $pG$, by definition of $e_H$ in Proposition~\ref{MoebiusFormula}. For the remaining summands, $\Phi_{H}(s)$ is convergent for $\Re(s)>a$ by Proposition~\ref{AlphaGH}, if it does not have convergence abscissa $\alpha_p(G)$. Otherwise, if $\Phi_{H}(s)$ has convergence abscissa $\alpha_p(G)$, then we have $r_i(H) = r_i(G)$ for $i = 1,\ldots, e-1$ and $r_e(H)=0$. This and the fact that $w_e(l)$ vanishes for $2\leq l\leq p^{e-1}$ implies
$$ \Lambda_{G}(s) = \prod\limits_{l=2}^{p^e} \zeta_F \Big( ls - \sum\limits_{i=1}^e r_i(G) w_i(l) \Big) = \Lambda_{H}(s) \prod\limits_{l=p^{e-1}+1}^{p^e} \zeta_F\Big( ls - \sum\limits_{i=1}^e r_i(G)w_i(l) \Big) $$
by the definition of $\Lambda$ in Proposition~\ref{PropertiesLambdaG}. Hence, the product 
$$ \Phi_{H}(s)\Lambda_{G}(s)^{-1} = \Psi_{H}(s)\prod\limits_{l=p^{e-1}+1}^{p^e} \zeta_F\Big( ls - \sum\limits_{i=1}^e r_i(G)w_i(l) \Big)^{-1} $$
is convergent for $\Re(s) > a$. Putting all together, $\Psi(s)$ has abscissa $a$.
\end{proof}

\section{Cauchy-Tauber formula}

Now we can finalise the proof of Theorem~\ref{main2}. We consider the Dirichlet series
$$ \varPhi(F,G;s) = \sum_{\mathrm{Gal}(E/F)\simeq G} \Norm{\mathfrak{f}(E/F)}^{-s} = \sum\limits_{n\geq 0} c_nt^n $$ 
as power series in $t=q^{-s}$ with coefficients $c_n$. Then the counting function $C(F,G;X)$ coincides with coefficients sum of this series via
$$ C(F,G;q^m) = \sum\limits_{n=0}^m c_n.  $$

\begin{proof}[\textbf{Proof of Theorem \ref{main2}.}] By Theorem \ref{main1}, the power series expansion of $\Phi(F,G;s)$ in $t=q^{-s}$ has radius of convergence $R=q^{-\alpha_p(G)}$, and it is meromorphic continuable beyond its circle of convergence. By Proposition \ref{PropertiesLambdaG}, the poles of the continuation are of the form $t=R\xi$ for finitely many roots of unity $\xi$. We now obtain Theorem \ref{main2} from a tauberian theorem like Theorem A.5 in \cite{Lagemann2012}.
\end{proof}

%ACKNOWLEDGEMENT

\textbf{Acknowledgement}. I was funded by Deutsche Forschungsgesellschaft via the priority project SPP 1489 KL 1424/8-1.

\begin{appendix}

\section{Delsarte formula}\label{Delsarte formula}

In this section, we develop an additive formula for the number of subgroups with given quotient, i.e. a formula for
$$ \kappa(A,G) = \left|\Big\{ B \unlhd A : A/B \iso G \Big\} \right|, $$
where $A$ is a finitely generated abelian group and $G$ is a finite abelian $p$-group. This shall be done with Jean Delsarte's theory of M\"obius functions for finite abelian groups \cite{Delsarte1948}. These methods was used in the mentioned article to compute a formula for the number $\lambda(A,G)$ of subgroups $B\unlhd A$ isomorphic with $G$ for finite $A$. This is nearly the same as computing $\kappa(A,G)$. For $A$ being finite, we have $\kappa(A,G) = \lambda(A,G)$ by the duality of abelian groups. The formula for $\kappa(A,G)$ extends to finitely generated $A$, and the formula for $\lambda(A,G)$ however does not. Albeit we can easily deduce the formula for $\kappa(A,G)$ from the formula for $\lambda(A,G)$ via $\kappa(A,G) = \kappa(A/\exp(G)A,G) = \lambda(A/\exp(G)A,G)$, I wish to recap the beautiful and short calculation of~$\kappa(A,G)$.

\begin{lemma}[See Delsarte \cite{Delsarte1948}]\label{DelsarteMoebiusLemma} Let $G$ be a finite abelian $p$-group with signature $\tupel{r}(G)$. Then there is a multivariate polynomial $f_G(\tupel{X})$ with degree $\tupel{r}(G)$ such that 
$$ f_G(\tupel{x}(A)) = |\Epi(A,G)| $$ 
equals the number of epimorphisms $A \rightarrow G$ for any finitely generated abelian groups $A$ with $\tupel{x}(A) = (\grindex{p^{i-1}A}{p^iA})_{i \geq 1}$. In particular, we have 
$$ f_G(\tupel{X}) = \sum\limits_{pG\unlhd H \unlhd G} \mu(G/H) \tupel{X}^{\tupel{r}(H)}, $$
where $\mu(A)$ denotes the Delsarte-M\"obius function for finite abelian $p$-groups with
$$ \mu(A) = \begin{cases} (-1)^r p^{r(r-1)/2}, & \text{if } A \simeq \cyclic{p}^r, r\geq 0, \\
            0, & \text{elsewise.} 
            \end{cases}
$$  
\end{lemma}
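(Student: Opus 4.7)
My plan is to assemble $f_G$ in three steps: first compute $|\Hom(A,K)|$ for any finite abelian $p$-group $K$; then reduce the count of epimorphisms $A\to G$ to the elementary abelian quotient $G/pG$; and finally apply classical Möbius inversion on the subspace lattice of $G/pG$ to extract the Delsarte coefficients.

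As the first step I would verify that for any finitely generated abelian group $A$ and any finite abelian $p$-group $K$,
$$|\Hom(A,K)| = \prod_{i\geq 1} x_i(A)^{r_i(K)} = \tupel{x}(A)^{\tupel{r}(K)}.$$
Decomposing $K \iso \prod_{i\geq 1} \cyclic{p^i}^{\,r_i(K)-r_{i+1}(K)}$, this reduces to the cyclic case $|\Hom(A,\cyclic{p^i})| = \grindex{A}{p^iA} = \prod_{j=1}^i x_j(A)$, followed by a routine rearrangement of exponents using the identity $r_j(K) = \sum_{i\geq j}\big(r_i(K)-r_{i+1}(K)\big)$.

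As the second step I would observe that a homomorphism $\phi:A\to G$ is surjective if and only if its reduction $\bar\phi:A\to G/pG$ is surjective: indeed, $\phi(A)+pG = G$ forces $G/\phi(A)$ to be a finite $p$-group coinciding with its own $p$-multiple, hence trivial. The subgroups $K$ with $pG\unlhd K\unlhd G$ are in bijection with the $\ffield{p}$-subspaces $V = K/pG$ of $G/pG$, and a homomorphism $\phi\in\Hom(A,G)$ with $\bar\phi(A)\leq V$ is the same datum as an element of $\Hom(A,K)$. Inclusion–exclusion on the subspace lattice of $G/pG$ therefore yields
$$|\Epi(A,G)| = \sum_{pG\,\unlhd\,K\,\unlhd\,G} \mu_{L}(K/pG,\,G/pG)\,|\Hom(A,K)|,$$
where $\mu_L$ denotes the Möbius function of the subspace lattice $L$ of $G/pG$.

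Finally I would invoke the classical $q$-analogue computation $\mu_{L(E)}(0,E) = (-1)^r p^{r(r-1)/2}$ for an $r$-dimensional $\ffield{p}$-vector space $E$; since the interval $[K/pG,\,G/pG]$ is canonically isomorphic to the full subspace lattice of $G/K\iso\cyclic{p}^r$, this value agrees exactly with Delsarte's $\mu(G/K)$. Combining with Step~1 gives
$$|\Epi(A,G)| = \sum_{pG\,\unlhd\,K\,\unlhd\,G} \mu(G/K)\,\tupel{x}(A)^{\tupel{r}(K)},$$
so that $f_G(\tupel{X}) = \sum_{pG\unlhd K\unlhd G} \mu(G/K)\,\tupel{X}^{\tupel{r}(K)}$ does the job, with the monomial $\tupel{X}^{\tupel{r}(G)}$ appearing uniquely at $K=G$ with coefficient $\mu(1)=1$. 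The only nontrivial ingredient is the classical subspace-lattice Möbius formula, which is well known and can be derived in a few lines by induction on $r$ from the defining identity $\sum_{V\leq E}\mu_{L(E)}(0,V)=0$ together with Gaussian-binomial subspace counts; everything else in the argument is essentially formal.
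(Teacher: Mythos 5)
Your argument is correct, but it follows a genuinely different route to the Möbius coefficients than the paper does. The paper works over the full subgroup lattice of $G$: it starts from the relation $|\Hom(A,G)| = \sum_{H\unlhd G}|\Epi(A,H)|$, inverts it by Delsarte's abstract Möbius function on the subgroup lattice of a finite abelian $p$-group, and only \emph{a posteriori} restricts the sum to $pG\unlhd H\unlhd G$ because $\mu(G/H)$ vanishes unless $G/H$ is elementary abelian. You instead begin with a Nakayama-type reduction (surjectivity of $\phi:A\to G$ is equivalent to surjectivity of $\bar\phi:A\to G/pG$), identify the subgroups $pG\unlhd K\unlhd G$ with the $\ffield{p}$-subspaces of $G/pG$, and then invert on the geometric lattice of subspaces, where the Möbius values $(-1)^r p^{r(r-1)/2}$ are the classical $q$-binomial ones. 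Both routes compute $|\Hom(A,K)|=\tupel{x}(A)^{\tupel{r}(K)}$ by essentially the same multiplicative bookkeeping (the paper via the elementary divisors of $A$ and duality, you via the cyclic factors of $K$). What your version buys: the restriction to $pG\unlhd K$ emerges structurally rather than from a vanishing statement, and the only combinatorial input is the standard subspace-lattice Möbius formula, provable in a few lines, so the argument is self-contained without importing Delsarte's general theory. What the paper's version buys: it is more compact once Delsarte's inversion formula is cited, and it directly motivates the closed form of $\mu$ on arbitrary finite abelian $p$-groups, which is then used as a black box throughout the rest of the paper.
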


\begin{cor}\label{DelsarteMoebiusLemmaCorollary}  
\begin{enumerate}[(a)]
 \item The number of automorphisms $G \rightarrow G$ equals $f_G(\tupel{x}(G)) = |\Aut(G)|$.
 \item For any finitely generated abelian group $A$, we have $\kappa(A,G) = f_G(\tupel{x}(A))/|\Aut(G)|$. 
\end{enumerate}
\end{cor}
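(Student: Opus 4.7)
My plan is to derive both statements as direct consequences of Lemma~\ref{DelsarteMoebiusLemma}, which already identifies $f_G(\tupel{x}(A))$ with the number of epimorphisms $A\twoheadrightarrow G$. The only genuine content beyond the lemma is to relate $|\Epi(A,G)|$ to $|\Aut(G)|$ in part (a) and to $\kappa(A,G)$ in part (b).

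For part~(a), I would apply the lemma with $A = G$. This gives $f_G(\tupel{x}(G)) = |\Epi(G,G)|$. Now $G$ is a finite group, and any surjective endomorphism of a finite group is automatically injective (the kernel has order $|G|/|G| = 1$ since a surjection between finite sets of equal cardinality is a bijection). Hence $\Epi(G,G) = \Aut(G)$, and the claimed identity $f_G(\tupel{x}(G)) = |\Aut(G)|$ follows immediately.

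For part~(b), I would set up a natural surjection $\Epi(A,G) \twoheadrightarrow \{B \unlhd A : A/B \iso G\}$ sending $\phi \mapsto \ker(\phi)$. Given a subgroup $B\unlhd A$ with $A/B\iso G$, any isomorphism $A/B\xrightarrow{\sim} G$ composed with the canonical projection $A\twoheadrightarrow A/B$ yields an epimorphism with kernel $B$; conversely two epimorphisms $\phi_1,\phi_2 : A\twoheadrightarrow G$ share the same kernel iff they differ by post-composition with a unique automorphism of $G$, via the first isomorphism theorem. Therefore $\Aut(G)$ acts freely and transitively on each fiber of the map, so
\[
\kappa(A,G) = \frac{|\Epi(A,G)|}{|\Aut(G)|} = \frac{f_G(\tupel{x}(A))}{|\Aut(G)|}.
\]

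There is no real obstacle here: part~(a) is a one-line application of the pigeonhole principle for finite groups, and part~(b) is the standard orbit-counting argument for the free action of $\Aut(G)$ on $\Epi(A,G)$. The entire work has already been absorbed into Lemma~\ref{DelsarteMoebiusLemma}, whose polynomial $f_G$ encodes the combinatorics uniformly in the signature $\tupel{x}(A)$.
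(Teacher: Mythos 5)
Your proof is correct and takes essentially the same route as the paper: part (a) via the pigeonhole observation that $\Epi(G,G)=\Aut(G)$ for finite $G$, and part (b) via the free action of $\Aut(G)$ on $\Epi(A,G)$ by post-composition, whose orbits are in bijection with the kernels, i.e.\ with the subgroups $B\unlhd A$ satisfying $A/B\iso G$. The paper phrases this as $\Aut(G)$ acting ``faithfully'' with the number of orbits equal to the number of kernels, but the underlying orbit-counting argument is identical to yours.
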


\begin{remark} In Jean Delsarte's work \cite{Delsarte1948}, we also find the factorisation 
$$ f_G(\tupel{X}) = \prod\limits_{i\geq 1} X_i^{r_{i+1}(G)} \prod\limits_{j=r_{i+1}(G)}^{r_i(G)-1} (X_i-p^j). $$
\end{remark}

\begin{proof}[Proof of Lemma \ref{DelsarteMoebiusLemma} and its corollary] All pieces for the proof are contained in \cite{Delsarte1948}, which I wish to reflect here. For any finitely generated abelian group $A$, let $\eta(A,G) = |\Hom(A,G)|$ be the number of homomorphisms $A \rightarrow G$, and let $\varepsilon(A,G) = |\Epi(A,G)|$ be the number of epimorphisms $A \rightarrow G$. Without loss of generality, we can assume $A$ to be finite, since all homomorphisms $A \rightarrow G$ factors through a common finite quotient group of $A$, i.e. $A/\exp(G)A$. These functions fulfil the relation
$$ \eta(A,G) = \sum\limits_{H \unlhd G} \varepsilon(A,H), $$
and the Delsarte-M\"obius inversion formula provides
$$ \varepsilon(A,G) = \sum\limits_{H \unlhd G} \mu(G/H) \eta(A,H) = \sum\limits_{pG \unlhd H \unlhd G} \mu(G/H) \eta(A,H). $$
The number of homomorphisms $A \rightarrow G$ depends only from the number of possible images for the elementary divisors of $A$. The images of the $r_i(A)-r_{i+1}(A)$ elementary divisors of $A$ equaling $p^i$ must be contained in $G[p^i]$, and any choice for their images in $G[p^i]$ is admissible. By duality of abelian groups, we have $G[p^i] \iso G/p^iG$, whence we obtain
\begin{align*} \eta(A,G) & = \prod\limits_{i\geq 1} |G[p^i]|^{r_i(A)-r_{i+1}(A)} = \prod\limits_{i\geq 1} p^{(r_1(G)+\ldots+r_i(G))(r_i(A)-r_{i+1}(A))} \\ & = \prod\limits_{i\geq 1} p^{r_i(A)r_i(G)} = \prod\limits_{i \geq 1} \grindex{p^{i-1}A}{p^iA}^{r_i(G)}. 
\end{align*}
Together with the Delsarte-M\"obius inversion formula above, we see that $\varepsilon(A,G)$ is a multivariate polynomial in $\tupel{X} = \tupel{x}(A)$ with the proposed assertions. In particular, Corollary \ref{DelsarteMoebiusLemmaCorollary} (b) follows from the well-known fact that $\Aut(G)$ acts faithfully on the set $\Epi(A,G)$ by composition, where the number of orbits equals the number of kernels of $\Epi(A,G)$. 
\end{proof}

\end{appendix}

\end{document}